\newtheorem{thm}{Theorem}[section]
\newtheorem{prop}[thm]{Proposition}
\theoremstyle{definition}
\newtheorem{defn}[thm]{Definition}
\newtheorem{example}[thm]{Example}
\theoremstyle{remark}
\newtheorem{rem}[thm]{Remark}
\numberwithin{equation}{section}
\newcommand{\DD}{\mathcal D}
\begin{document}
\title[Frequently hypercyclic $C$-dist...]{Frequently hypercyclic $C$-distribution semigroups and their generalizations}

\author{Marko Kosti\' c}
\address{Faculty of Technical Sciences,
University of Novi Sad,
Trg D. Obradovi\' ca 6, 21125 Novi Sad, Serbia}
\email{marco.s@verat.net}

{\renewcommand{\thefootnote}{} \footnote{2010 {\it Mathematics
Subject Classification.} Primary: 47A16 Secondary: 47B37, 47D06.
\\ \text{  }  \ \    {\it Key words and phrases.} $C$-distribution semigroups, integrated $C$-semigroups, $f$-frequent hypercyclicity, $q$-frequent hypercyclicity, Fr\' echet spaces.
\\  \text{  }  \ \ This research is partially supported by grant 174024 of Ministry
of Science and Technological Development, Republic of Serbia.}}

\begin{abstract}
In this paper, we introduce the notions of $f$-frequent hypercyclicity and ${\mathcal F}$-hypercyclicity for $C$-distribution semigroups in separable Fr\'echet spaces. We particularly analyze the classes of $q$-frequently hypercyclic $C$-distribution semigroups ($q\geq 1$) and frequently hypercyclic $C$-distribution semigroups, providing a great number of illustrative examples. 
\end{abstract}
\maketitle

\section{Introduction and Preliminaries}

The notion of a frequently hypercyclic linear continuous operator on a separable Fr\'echet space was introduced by F. Bayart and S. Grivaux in 2006 (\cite{bay1}). The general notion of $(m_k)$-hypercyclicity for linear continuous operators was introduced by F. Bayart and \'E. Matheron \cite{Baya1} in 2009, while some special cases of $(m_k)$-hypercyclicity, like $q$-frequent hypercyclicity ($q\in {\mathbb N}$), were analyzed by M. Gupta and A. Mundayadan in \cite{gupta}. 
Within the field of linear topological dynamics, the notion of ${\mathcal F}$-hypercyclicity, where ${\mathcal F}$ is a Furstenberg family, was introduced for the first time by S. Shkarin in 2009 (\cite{shkarin}); further contributions were given by J. B\`es, Q. Menet, A. Peris, Y. Puig \cite{biba-prim} and A. Bonilla,  K.-G. Grosse-Erdmann \cite{boni-upper}.
The notion of ${\mathcal F}$-hypercyclicity for linear not necessarily continuous operators has been recently introduced by the author in \cite{1211212018}. For more details on the subject, we refer the reader to \cite{Bay}-\cite{biba}, \cite{Grosse},  \cite{menet}-\cite{measures} and references cited therein.

On the other hand, the notion of a frequently hypercyclic strongly continuous semigroup on a separable Banach space was introduced by E. M. Mangino and A. Peris in 2011 (\cite{man-peris}). Frequently hypercyclic translation semigroups on weighted function spaces were further investigated by  E. M. Mangino and M. Murillo-Arcila in \cite{man-marina}, while the frequent hypercyclity of semigroup solutions for first-order partial differential equations arising in mathematical biology was investigated by C.-H. Hung and Y.-H. Chang in \cite{hung}. Frequent hypercyclicity and various generalizations of this concept for single operators and semigroups of operators are still very active field of research, full of open unsolved problems.

Hypercyclicity of $C$-regularized semigroups, distribution semigroups and unbounded linear operators in Banach spaces was analyzed by R. deLaubenfels, H. Emamirad and K.-G. Grosse-Erdmann in 2003 (\cite{cycch}). The non-existence of an appropriate reference which treats the frequent hypercyclicity of $C$-regularized semigroups and distribution semigroups strongly influenced us to write this paper. 

We work in the setting of separable infinite-dimensional Fr\'echet spaces, considering general classes of $C$-distribution semigroups and fractionally integrated $C$-semigroups (\cite{knjigah}-\cite{knjigaho}); here, we would like to point out that our results seem to be new even for strongly continuous semigroups of operators in Fr\'echet spaces. In contrast to the investigations of frequently hypercyclic strongly continuous semigroups of operators that are carried out so far, the notion of Pettis integrability does not play any significant role in our approach, which is primarly oriented for giving some new applications in the qualitative analysis of solutions of abstract ill-posed differential equations of first order. The notion of a $q$-frequently hypercyclic strongly continuous semigroup, where $q\geq 1,$ has been recently introduced and systematically analyzed in our joint paper with B. Chaouchi, S. Pilipovi\' c and D. Velinov \cite{qjua}; the notion of $f$-frequent hypercyclicity, introduced here for the first time as a continuous counterpart of $(m_k)$-hypercyclicity, seems to be not considered elsewhere even for strongly continuous semigroups of operators in Banach spaces. Albeit we analyze the general class of $C$-distribution semigroups, providing also some examples of frequently hypercyclic integrated semigroups, almost all structural results of ours are stated for the class of global $C$-regularized semigroups (for certain difficuties we have met in our exploration of frequently hypercyclic fractionally integrated $C$-semigroups, we refer the reader to Remark \ref{prcko-tres}). Without any doubt, our main theoretical result is Theorem \ref{oma}, which can be called $f$-Frequent Hypercyclicity Criterion for $C$-Regularized Semigroups. In Theorem \ref{oma-duo}, we state
Upper Frequent Hypercyclicity Criterion for $C$-Regularized Semigroups (this result seems to be new even for strongly continuous semigroups in Banach spaces, as well). From the point of view of possible applications, Theorem \ref{3.1.4.13}, in which we reconsider the spectral criterions esatablished by S. El Mourchid \cite[Theorem 2.1]{samir} and E. M. Mangino, A. Peris \cite[Corollary 2.3]{man-peris}, and Theorem \ref{3018}, in which we reconsider the famous Desch-Schappacher-Webb criterion for chaos of  strongly continuous semigroups \cite[Theorem 3.1]{fund}, are most important; both theorems are consequences of Theorem \ref{oma}. In Example \ref{freja}, we revisit \cite[Subsection 3.1.4]{knjigah} and prove that all examined $C$-regularized semigroups and integrated semigroups, including corresponding single linear operators, are frequently hypercyclic (we already know that these semigroups and operators are topologically mixing or chaotic in a certain sense). 

We use the standard notation throughout the paper. By $E$ we denote a separable infinite-dimensional Fr\' echet space (real or complex). We assume that
the topology of $E$ is induced by the fundamental system
$(p_{n})_{n\in {\mathbb N}}$ of increasing seminorms. If $Y$ is also a Fr\' echet space, over the same field of scalars ${\mathbb K}$ as $E,$ then by $L(E,Y)$ we denote the space consisting of all continuous linear mappings from $E$ into $Y.$ The
translation invariant metric $d: E\times E \rightarrow [0,\infty),$ defined by
$$
d(x,y):=\sum
\limits_{n=1}^{\infty}\frac{1}{2^{n}}\frac{p_{n}(x-y)}{1+p_{n}(x-y)},\
x,\ y\in E,
$$
satisfies, among many other properties, the following ones:
$d(x+u,y+v)\leq d(x,y)+d(u,v)$ and $d(cx,cy)\leq (|c|+1)d(x,y),\
c\in {\mathbb K},\ x,\ y,\ u,\ v\in X.$ Set $L(x,\epsilon):=\{y\in X : d(x,y)<\epsilon\}$ and $L_{n}(x,\epsilon):=\{y\in X : p_{n}(x-y)<\epsilon\}$ ($n\in {\mathbb N},$ $\epsilon>0,$ $x\in X$). By $E^{\ast}$ we denote the dual space of $E.$ For a closed linear operator $T$ on $E,$ we denote by $D(T),$ $R(T),$ $N(T),$ $\rho(T)$ and $\sigma_{p}(T)$ its domain, range, kernel, resolvent set and point spectrum, respectively. If ${\tilde E}$ is a linear subspace of $E,$ then the part of $T$ in ${\tilde E},$ $T_{|{\tilde E}}$ shortly, is defined through $T_{|{\tilde E}}:=\{(x,y) \in T : x,\ y\in {\tilde E}\}$ (we will identify an operator and its graph henceforth).
Set $D_{\infty}(T):=\bigcap_{k\in {\mathbb N}}D(T^{k}).$ 
We will always assume henceforth that $C\in L(E)$ and $C$ is injective. Put $p_{C}(x):=p(C^{-1}x),$ $p\in \circledast,$ $x\in R(C).$ Then
$p_{C}(\cdot)$ is a seminorm on $R(C)$ and the calibration
$(p_{C})_{p\in \circledast}$ induces a Fr\'echet topology on
$R(C);$ we denote this space by $[R(C)]_{\circledast}.$  
If $T^{k}$ is closed for any $k\in {\mathbb N},$ then the space
$C(D(T^{k})),$ equipped with the following family of seminorms $p_{k,n}(Cx):=p_{n}(x)+p_{n}(Tx)+\cdot \cdot \cdot +p_{n}(T^{k}x),$ $x\in D(T^{k}),$ is a Fr\'echet one ($n\in {\mathbb N}$). This space will be denoted by $[C(D(T^{k}))].$
For any $s\in {\mathbb R},$ we define $\lfloor s \rfloor :=\sup \{
l\in {\mathbb Z} : s\geq l \}$ and $\lceil s \rceil :=\inf \{ l\in
{\mathbb Z} : s\leq l \}.$ 

Let us recall that a series $\sum_{n=1}^{\infty}x_{n}$ in $E$ is called unconditionally convergent iff for every permutation $\sigma$ of ${\mathbb N}$,
the series $\sum_{n=1}^{\infty}x_{\sigma (n)}$  is convergent; it is well known that the absolute convergence of $\sum_{n=1}^{\infty}x_{n}$ (i.e., the convergence of $\sum_{n=1}^{\infty}p_{l}(x_{n})$ for all $l\in {\mathbb N}$) implies its unconditional convergence (see \cite{boni} and references cited therein for further information on the subject).

The Schwartz space of rapidly decreasing functions $\mathcal{S}$ is defined by the following system of seminorms
$
p_{m,n}(\psi):=\sup_{x\in\mathbb{R}}\, |x^m\psi^{(n)}(x)|,$
$\psi\in\mathcal{S},$ $m,\ n\in\mathbb{N}_0.$
We use notation  $\mathcal{D}=C_0^{\infty}(\mathbb{R})$
and $\mathcal{E}=C^{\infty}(\mathbb{R})$.
If $\emptyset \neq \Omega  \subseteq {\mathbb R},$ then the symbol $\mathcal{D}_{\Omega}$ denotes the subspace of $\mathcal{D}$ consisting of those functions $\varphi \in \mathcal{D}$ for which supp$(\varphi) \subseteq \Omega;$ $\mathcal{D}_{0}\equiv \mathcal{D}_{[0,\infty)}.$
The spaces
$\mathcal{D}'(E):=L(\mathcal{D},E)$,
$\mathcal{E}'(E):=L(\mathcal{E},E)$ and
$\mathcal{S}'(E):=L(\mathcal{S},E)$
are topologized in the usual way; the symbols
$\mathcal{D}'_{\Omega}(E)$,
$\mathcal{E}'_{\Omega}(E)$ and $\mathcal{S}'_{\Omega}(E)$ denote their subspaces containing $E$-valued distributions
whose supports are contained in $\Omega ;$ $\mathcal{D}'_{0}(E)\equiv \mathcal{D}'_{[0,\infty)}(E)$, $\mathcal{E}'_{0}(E)\equiv \mathcal{E}'_{[0,\infty)}(E)$, $\mathcal{S}'_{0}(E)\equiv \mathcal{S}'_{[0,\infty)}(E).$ By $\delta_{t}$ we denote the Dirac distribution centered at point $t$ ($t\in {\mathbb R}$).
If $\varphi$, $\psi:\mathbb{R}\to\mathbb{C}$ are
measurable functions, then we define 
$
\varphi*_0
\psi(t):=\int^t_0\varphi(t-s)\psi(s)\,ds,\;t\;\in\mathbb{R}.
$
The convolution of vector-valued distributions will be taken in the sense of \cite[Proposition 1.1]{ku112}.

\subsection{$C$-distribution semigroups and fractionally integrated $C$-semigroups}\label{peru}

Let $C\in L(E)$ be an injective operator, and let $\mathcal{G}\in\mathcal{D}_0'^{\ast}(L(E))$ satisfy $C\mathcal{G}=\mathcal{G}C$.
Then we say that $\mathcal{G}$ is a $C$-distribution semigroup, shortly (C-DS), iff ${\mathcal G}$ satisfies the following two conditions:
\begin{itemize}
\item[(i)] ${\mathcal G}(\varphi\ast_0\psi)C={\mathcal G}(\varphi){\mathcal G}(\psi)$, $\varphi,\, \psi\in\DD$;\\
\item[(ii)] ${\mathcal N}({\mathcal G}):=\bigcap\limits_{\varphi\in\DD_0}N({\mathcal G}(\varphi))=\{0\}$.
\end{itemize}
If, additionally, ${\mathcal{R}}(\mathcal{G}):=\bigcup_{\varphi\in\mathcal{D}_0}R(\mathcal{G}(\varphi))$
is dense in $E$, then we say that 
${\mathcal G}$ is a dense $C$-distribution semigroup.

Let $T\in\mathcal{E}_0',$ 
i.e., $T$ is a scalar-valued distribution with compact support contained in $[0,\infty)$.
Set
\[
G(T)x:=\bigl\{(x,y) \in E\times E : \mathcal{G}(T*\varphi)x=\mathcal{G}(\varphi)y\mbox{ for all }\varphi\in\mathcal{D}_0  \bigr\}.
\]
Then  it can be easily seen that $G(T)$ is a closed linear operator.
We define the (infinitesimal) generator of a (C-DS) $\mathcal{G}$ by $A:=G(-\delta').$ 

Suppose that $\mathcal{G}$ is a (C-DS). Then
$\mathcal{G}(\varphi)\mathcal{G}(\psi)=\mathcal{G}(\psi)\mathcal{G}(\varphi)$ for all $\varphi,\,\psi\in\mathcal{D},$ $C^{-1}AC=A$ and the following holds:
Let $S$, $T\in\mathcal{E}'_0$, $\varphi\in\mathcal{D}_0$,  $\psi\in\mathcal{D}$ and $x\in E$.
Then we have:
\begin{itemize}
\item[A1.] $G(S)G(T)\subseteq G(S*T)$ with $D(G(S)G(T))=D(G(S*T))\cap D(G(T))$, and $G(S)+G(T)\subseteq G(S+T)$.
\item[A2.] $(\mathcal{G}(\psi)x$, $\mathcal{G}(-\psi^{\prime})x-\psi(0)Cx)\in A$.
\end{itemize}

We denote by $D({\mathcal G})$ the set consisting of those elements $x\in E$ for which $x\in D(G({\delta}_t)),$ $t\geq 0$ and the mapping $t\mapsto G({\delta}_t)x,$ $t\geq 0$ is continuous. By A1., we have 
that
\begin{align*}
D\bigl(G(\delta_s)G(\delta_t)\bigr)\!=\!D\bigl(G(\delta_s*\delta_t)\bigr)\cap D\bigl(G(\delta_t)\bigr)\!=\!D\bigl(G(\delta_{t+s})\bigr)\cap D\bigl(G(\delta_t)\bigr),
\;t,\,s\!\geq\! 0,
\end{align*}
which clearly implies $G(\delta_t)(D(\mathcal{G}))\subseteq D(\mathcal{G})$, $t\geq 0$ and 
\begin{equation}\label{C-DS}
G\bigl(\delta_s\bigr)G\bigl(\delta_t
\bigr)x=G\bigl(\delta_{t+s}\bigr)x, \quad t,\,s\geq 0,\ x\in D(\mathcal{G}).
\end{equation}

The following definition is well-known:

\begin{defn}\label{first}
Let $\alpha \geq 0,$ and let $A$ be a closed linear operator. If
there exists a strongly continuous operator family
$(S_\alpha(t))_{t\geq 0}\subseteq L(E)$ such that:
\begin{itemize}
\item[(i)] $S_\alpha(t)A\subseteq AS_\alpha(t)$, $t\geq 0$,
\item[(ii)] $S_\alpha(t)C=CS_\alpha(t)$, $t\geq 0$,
\item[(iii)] for all $x\in E$ and $t\geq 0$: $\int_0^tS_\alpha(s)x\,ds\in D(A)$ and
\begin{align*}
A\int\limits_0^tS_\alpha(s)x\,ds=S_\alpha(t)x-g_{\alpha +1}(t)Cx,
\end{align*}
\end{itemize}
then it is said that $A$ is a subgenerator of a (global)
$\alpha$-times integrated $C$-semigroup $(S_\alpha(t))_{t\geq 0}$.
Furthermore, it is said that $(S_\alpha(t))_{t\geq 0}$ is an
exponentially equicontinuous, $\alpha$-times integrated
$C$-semigroup with a subgenerator $A$ if, in addition, there exists
$\omega \in {\mathbb R}$ such that the family $\{e^{-\omega
t}S_{\alpha}(t) : t\geq 0\}\subseteq L(E)$ is equicontinuous.
\end{defn}

If $\alpha =0,$ then $(S_0(t))_{t\geq 0}$ is
also said to be a $C$-regularized semigroup with subgenerator $A;$ in this case, we have the following simple functional equation: 
$S_0(t)S_0(s)=S_0(t+s)C,$ $t,\ s\geq 0.$
Moreover, if $\alpha \geq 0$ and
$(S_\alpha(t))_{t\geq 0}$ is a
global $\alpha$-times integrated $C$-semigroup with subgenerator $A,$ then
$(S_\alpha(t))_{t\geq 0}$ is
locally equicontinuous, i.e., the following holds: for every $T>0$
and $n\in {\mathbb N},$ there exist $m\in {\mathbb N}$ and $c>0$ such
that $p_{n}(S_{\alpha}(t)x)\leq cp_{m}(x),$ $t\in [0,T],$ $x\in E.$
The integral generator of $(S_\alpha(t))_{t\geq 0}$ is defined by
\begin{align*}
\hat{A}:=\Biggl\{(x,y)\in E\times
E:S_\alpha(t)x-g_{\alpha+1}(t)Cx=\int\limits^t_0S_\alpha(s)y\,ds,\;t\geq
0\Biggr\}.
\end{align*}
The integral generator of $(S_\alpha(t))_{t\geq 0}$
is a closed linear operator which is an
extension of any subgenerator of $(S_\alpha(t))_{t\geq 0}.$
Arguing as in the proofs of
\cite[Proposition 2.1.6, Proposition 2.1.19]{knjigah}, we may deduce that
the integral generator of $(S_\alpha(t))_{t\geq 0}$
is its maximal subgenerator with respect
to the set inclusion. Furthermore, the
following equality holds $\hat{A}=C^{-1}AC.$

Let $A$ be a closed linear operator on $E.$ Denote by $Z_{1}(A)$ the space consisting of
those elements $x\in E$ for which there exists a unique
continuous mapping $u :[0,\infty) \rightarrow E$ satisfying $\int^t_0 u(s,x)\,ds\in D(A)$ and
$A\int^t_0 u(s,x)\,ds=u(t,x)-x$, $t\geq 0,$ i.e., the unique mild
solution of the corresponding Cauchy problem $(ACP_{1}):$
$$
(ACP_{1}) : u^{\prime}(t)=Au(t),\ t\geq 0, \ u(0)=x.
$$

Suppose now that $A$ is a subgenerator of a
global $\alpha$-times integrated $C$-semigroup $(S_\alpha(t))_{t\geq
0}$ for some $\alpha \geq 0.$ Then there is only one (trivial) mild solution of $
(ACP_{1})$ with $x=0,$ so that $Z_{1}(A)$ is a linear subspace of $X.$
Moreover, for every $\beta>\alpha ,$ the
operator $A$ is a subgenerator (the integral generator) of a global
$\beta$-times integrated $C$-semigroup $(S_\beta(t)\equiv
(g_{\beta-\alpha}\ast S_{\alpha}\cdot)(t))_{t\geq 0},$ where $g_{\zeta}(t):=t^{\zeta-1}/\Gamma(\zeta)$ for $t>0$ and $\Gamma (\cdot)$ denotes the Gamma function ($\zeta>0$). The
space
$Z_{1}(A)$
consists exactly of those elements $x\in E$ for which the mapping
$t\mapsto C^{-1}S_{\lceil \alpha \rceil}(t)x,$ $t\geq 0$ is well
defined and $\lceil \alpha \rceil$-times continuously differentiable
on $[0,\infty);$ see e.g. \cite{knjigaho}. Set
\begin{align}\label{monman}
{\mathcal G}(\varphi)x:=(-1)^{\lceil \alpha \rceil}\int
\limits^{\infty}_{0}\varphi^{(\lceil \alpha \rceil)}(t)S_{\lceil
\alpha \rceil}(t)x\, dt,\quad \varphi \in {\mathcal D}_{{\mathbb
K}},\ x\in E
\end{align}
and
$$
G\bigl(\delta_{t}\bigr)x:=\frac{d^{\lceil \alpha \rceil}}{dt^{\lceil
\alpha \rceil}}C^{-1}S_{\lceil \alpha \rceil}(t)x, \quad t\geq 0,\
x\in Z_{1}(A).
$$
Then ${\mathcal G}$ is a $C$-distribution semigroup generated by $C^{-1}AC$ and $Z_{1}(A)=D({\mathcal G})$
(see e.g.
\cite{knjigah} and \cite{C-ultra}). 
Before proceeding further, it should be observed that the solution
space $Z_{1}(A)$ is independent of the choice of
$(S_\alpha(t))_{t\geq 0}$ in the
following sense: If $C_{1}\in L(X)$ is another injective operator
with $C_{1}A\subseteq AC_{1},$ $\gamma \geq 0,$ $x\in E$ and $A$ is
a subgenerator (the integral generator) of a global $\gamma$-times
integrated $C_{1}$-semigroup $(S^\gamma(t))_{t\geq 0},$ then the mapping $t\mapsto
C^{-1}S_{\lceil \alpha \rceil}(t)x,$ $t\geq 0$ is well defined and
$\lceil \alpha \rceil$-times continuously differentiable on
$[0,\infty)$ iff the mapping $t\mapsto C_{1}^{-1}S^{\lceil \gamma
\rceil}(t)x,$ $t\geq 0$ is well defined and $\lceil \gamma
\rceil$-times continuously differentiable on $[0,\infty)$ (with the
clear notation). If this is the case, then we have that
$u(t;x):=G(\delta_{t})x=\frac{d^{\lceil \gamma \rceil}}{dt^{\lceil
\gamma \rceil}}C_{1}^{-1}S^{\lceil \gamma \rceil}(t)x,$ $t\geq 0$
is a unique mild
solution of the corresponding Cauchy problem $(ACP_{1}).$ Furthermore,
$(S_\alpha(t))_{t\geq 0}$ and $(S^\gamma(t))_{t\geq
0}$
share the same (subspace) $f$-frequently hypercyclic properties defined below.

We refer the reader to \cite{knjigah}-\cite{C-ultra} for further information concerning $C$-distribution semigroups. The notion of exponentially equicontinuous, analytic fractionally integrated $C$-semigroups will be taken 
in a broad sense of \cite[Definition 2.2.1(i)]{knjigaho}, while the notion of an entire $C$-regularized group will be taken 
in the sense of \cite[Definition 2.2.9]{knjigaho}. For more details about $C$-regularized semigroups and their applications, we refer the reader to the monograph \cite{l1} by R. deLaubenfels.

\subsection{Lower and upper densities}
First of all, we need to recall the following definitions from \cite{1211212018}:

\begin{defn}\label{4-skins-MLO-okay}
Let $(T_{n})_{n\in {\mathbb N}}$ be a sequence of linear operators acting between the spaces
$X$ and $Y,$ let $T$ be a linear operator on $X$, and let $x\in X$. Suppose that ${\mathcal F}\in P(P({\mathbb N}))$ and ${\mathcal F}\neq \emptyset.$ Then we say
that:
\begin{itemize}
\item[(i)] $x$ is an ${\mathcal F}$-hypercyclic element of the sequence
$(T_{n})_{n\in {\mathbb N}}$ iff
$x\in \bigcap_{n\in {\mathbb N}} D(T_{n})$ and for each open non-empty subset $V$ of $Y$ we have that 
$$
S(x,V):=\bigl\{ n\in {\mathbb N} : T_{n}x \in V \bigr\}\in {\mathcal F} ;
$$ 
$(T_{n})_{n\in {\mathbb N}}$ is said to be ${\mathcal F}$-hypercyclic iff there exists an ${\mathcal F}$-hypercyclic element of
$(T_{n})_{n\in {\mathbb N}}$;
\item[(ii)] $T$ is ${\mathcal F}$-hypercyclic iff the sequence
$(T^{n})_{n\in {\mathbb N}}$ is ${\mathcal F}$-hypercyclic; $x\in D_{\infty}(T)$ is said to be
an ${\mathcal F}$-hypercyclic element of $T$ iff $x$ is an ${\mathcal F}$-hypercyclic element of the sequence
$(T^{n})_{n\in {\mathbb N}}.$
\end{itemize}
\end{defn}

\begin{defn}\label{prckojed}
Let $q\in [1,\infty),$ let $A \subseteq {\mathbb N}$, and let $(m_{n})$ be an increasing sequence in $[1,\infty).$ Then:
\begin{itemize}
\item[(i)] The lower $q$-density of $A,$ denoted by $\underline{d}_{q}(A),$ is defined through:
$$
\underline{d}_{q}(A):=\liminf_{n\rightarrow \infty}\frac{|A \cap [1,n^{q}]|}{n}.
$$
\item[(ii)] The upper $q$-density of $A,$ denoted by $\overline{d}_{q}(A),$ is defined through:
$$
\overline{d}_{q}(A):=\limsup_{n\rightarrow \infty}\frac{|A \cap [1,n^{q}]|}{n}.
$$
\item[(iii)] The lower $(m_{n})$-density of $A,$ denoted by $\underline{d}_{m_{n}}(A),$ is defined through:
$$
\underline{d}_{{m_{n}}} (A):=\liminf_{n\rightarrow \infty}\frac{|A \cap [1,m_{n}]|}{n}.
$$
\item[(iv)] The upper $(m_{n})$-density of $A,$ denoted by $\overline{d}_{{m_{n}}}(A),$ is defined through:
$$
\overline{d}_{{m_{n}}}(A):=\limsup_{n\rightarrow \infty}\frac{|A \cap [1,m_{n}]|}{n}.
$$
\end{itemize}
\end{defn}

Assume that $q\in [1,\infty)$ and $(m_{n})$ is an increasing sequence  in $[1,\infty).$ Consider the notion introduced in Definition \ref{4-skins-MLO-okay} with:
(i) ${\mathcal F}=\{A \subseteq {\mathbb N} : \underline{d}(A)>0\},$ (ii) ${\mathcal F}=\{A \subseteq {\mathbb N} : \underline{d}_{q}(A)>0\},$
(iii) ${\mathcal F}=\{A \subseteq {\mathbb N} : \underline{d}_{{m_{n}}}(A)>0\};$ 
then we say that $(T_{n})_{n\in {\mathbb N}}$ ($T,$ $x$) is frequently hypercyclic, $q$-frequently hypercyclic and
l-$(m_{n})$-hypercyclic, respectively. 

Denote by $m(\cdot)$ the Lebesgue measure on $[0,\infty).$ 
We would like to propose the following definition:

\begin{defn}\label{prckojed-prim}
Let $q\in [1,\infty),$ let $A\subseteq [0,\infty)$, and let $f : [0,\infty) \rightarrow [1,\infty)$ be an increasing mapping. Then: 
\begin{itemize}
\item[(i)] The lower $qc$-density of $A,$ denoted by $\underline{d}_{qc}(A),$ is defined through:
$$
\underline{d}_{qc}(A):=\liminf_{t\rightarrow \infty}\frac{m(A \cap [0,t^{q}])}{t}.
$$
\item[(ii)] The upper $qc$-density of $A,$ denoted by $\overline{d}_{qc}(A),$ is defined through:
$$
\overline{d}_{qc}(A):=\limsup_{t\rightarrow \infty}\frac{m(A \cap [0,t^{q}])}{t}.
$$
\item[(iii)] The lower $f$-density of $A,$ denoted by $\underline{d}_{f}(A),$ is defined through:
$$
\underline{d}_{f} (A):=\liminf_{t\rightarrow \infty}\frac{m(A \cap [0,f(t)])}{t}.
$$
\item[(iv)] The upper $f$-density of $A,$ denoted by $\overline{d}_{f}(A),$ is defined through:
$$
\overline{d}_{f}(A):=\limsup_{t\rightarrow \infty}\frac{m(A \cap [0,f(t)])}{t}.
$$
\end{itemize}
\end{defn}

It is clear that Definition \ref{prckojed-prim} provides continues analogues of the notion introduced in Definition \ref{prckojed}, which have been analyzed in \cite[Section 2]{1211212018} in more detail. For the sake of brevity and better exposition, we will skip all related details about possibilities to transfer the results established in \cite{1211212018} for continuous lower and upper densities.

\section[Generalized frequent hypercyclicity for $C$-distribution semigroups...]{Generalized frequent hypercyclicity for $C$-distribution semigroups and fractionally integrated $C$-semigroups}\label{srboljub}

Let $P([0,\infty))$ denote the power set of $[0,\infty).$ We would  like to propose the following general definition:

\begin{defn}\label{4-skins}
Let ${\mathcal G}$ be a $C$-distribution semigroup, and let $x\in D({\mathcal G})$. Suppose that ${\mathcal F}\in P(P([0,\infty)))$ and ${\mathcal F}\neq \emptyset.$ Then we say
that $x$ is an ${\mathcal F}$-hypercyclic element of ${\mathcal G}$ iff
for each open non-empty subset $V$ of $E$ we have 
$$
S(x,V):=\bigl\{ t\geq 0 : G(\delta_t)x \in V \bigr\}\in {\mathcal F} ;
$$ 
${\mathcal G}$ is said to be ${\mathcal F}$-hypercyclic iff there exists an ${\mathcal F}$-hypercyclic element of
${\mathcal G}.$
\end{defn}

The notion introduced in the following definition is a special case of the notion introduced above, with ${\mathcal F}$ being the collection of all non-empty subsets $A$ of $[0,\infty)$ such that the lower $qc$-density of $A,$ the upper $qc$-density of $A,$ the lower $f$-density of $A$ or the upper $f$-density of $A$ is positive:

\begin{defn}\label{prckojedd}
Let $q\in [1,\infty),$ and let $f : [0,\infty) \rightarrow [1,\infty)$ be an increasing mapping. Suppose that ${\mathcal G}$ is a $C$-distribution semigroup. Then we say that:
\begin{itemize}
\item[(i)] ${\mathcal G}$ is $q$-frequently hypercyclic iff there exists $x\in D({\mathcal G})$ such that for each open non-empty subset $V$ of $E$ we have $\underline{d}_{qc}(\{ t\geq 0 : G(\delta_t)x \in V \bigr\})>0;$
\item[(ii)] ${\mathcal G}$ is upper $q$-frequently hypercyclic iff there exists $x\in D({\mathcal G})$ such that for each open non-empty subset $V$ of $E$ we have $\overline{d}_{qc}(\{ t\geq 0 : G(\delta_t)x \in V \bigr\})>0;$
\item[(iii)] ${\mathcal G}$ is $f$-frequently hypercyclic iff there exists $x\in D({\mathcal G})$ such that for each open non-empty subset $V$ of $E$ we have $\underline{d}_{f}(\{ t\geq 0 : G(\delta_t)x \in V \bigr\})>0;$
\item[(iv)] ${\mathcal G}$ is upper $f$-frequently hypercyclic iff there exists $x\in D({\mathcal G})$ such that for each open non-empty subset $V$ of $E$ we have $\underline{d}_{f}(\{ t\geq 0 : G(\delta_t)x \in V \bigr\})>0.$
\end{itemize}
\end{defn}

It seems natural to reformulate the notion introduced in the previous two definitions for fractionally integrated $C$-semigroups:

\begin{defn}\label{prcko-raki}
Suppose that $A$ is a subgenerator of a
global $\alpha$-times integrated $C$-semigroup $(S_\alpha(t))_{t\geq
0}$ for some $\alpha \geq 0.$ Let ${\mathcal F}\in P(P([0,\infty)))$ and ${\mathcal F}\neq \emptyset.$ Then we say that an element $x\in Z_{1}(A)$ is an ${\mathcal F}$-hypercyclic element of $(S_\alpha(t))_{t\geq
0}$ iff $x$ is an ${\mathcal F}$-hypercyclic element of the induced $C$-distribution semigroup ${\mathcal G}$ defined through \eqref{monman}; 
$(S_\alpha(t))_{t\geq
0}$ is said to be ${\mathcal F}$-hypercyclic iff ${\mathcal G}$ is said to be ${\mathcal F}$-hypercyclic.
\end{defn}

\begin{defn}\label{prcko-raki}
Suppose that $A$ is a subgenerator of a
global $\alpha$-times integrated $C$-semigroup $(S_\alpha(t))_{t\geq
0}$  for some $\alpha \geq 0.$ Let $q\in [1,\infty),$ and let $f : [0,\infty) \rightarrow [1,\infty)$ be an increasing mapping. Then it is said that $(S_\alpha(t))_{t\geq
0}$ is $q$-frequently hypercyclic (upper $q$-frequently hypercyclic, $f$-frequently hypercyclic, upper $f$-frequently hypercyclic) iff the induced $C$-distribution semigroup ${\mathcal G}$, defined through \eqref{monman}, is.
\end{defn}

As mentioned in the introductory part, the following result can be viewed as $f$-Frequent Hypercyclicity Criterion for $C$-Regularized Semigroups:

\begin{thm}\label{oma}
Suppose that $A$ is a subgenerator of a global $C$-regularized semigroup $(S_{0}(t))_{t\geq 0}$ on $E$ and $f : [0,\infty) \rightarrow [1,\infty)$ is an increasing mapping. Set $T(t)x:=C^{-1}S_{0}(t)x,$ $t\geq 0,$ $x\in Z_{1}(A)$ and $m_{k}:=f(k),$ $k\in {\mathbb N}.$
Suppose that there are a number $t_{0}>0,$ a dense subset $E_{0}$ of $E$ and mappings $S_{n} : E_{0} \rightarrow R(C)$ ($n\in {\mathbb N}$) such that
the following conditions hold for all $y\in E_{0}$:
\begin{itemize} 
\item[(i)] The series $\sum_{n=1}^{k}T(t_{0}\lfloor m_{k}\rfloor)S_{\lfloor m_{k-n}\rfloor}y$ converges unconditionally, uniformly in $k\in {\mathbb N}.$
\item[(ii)] The series $\sum_{n=1}^{\infty}T(t_{0}\lfloor m_{k}\rfloor)S_{\lfloor m_{k+n} \rfloor}y$ converges unconditionally, uniformly in $k\in {\mathbb N}.$
\item[(iii)] The series $\sum_{n=1}^{\infty}S_{\lfloor m_{n} \rfloor}y$ converges unconditionally, uniformly in $n\in {\mathbb N}.$
\item[(iv)] $\lim_{n\rightarrow \infty}T(t_{0}\lfloor m_{n} \rfloor)S_{\lfloor m_{n} \rfloor}y=y.$
\item[(v)] $R(C)$ is dense in $E.$
\end{itemize}
Then $(S_{0}(t))_{t\geq 0}$ is $f$-frequently hypercyclic and the operator $T(t_{0})$ is l-$(m_{k})$-frequently hypercyclic.
\end{thm}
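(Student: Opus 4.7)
The plan is to adapt the classical Bayart--Grivaux construction of frequently hypercyclic vectors to the $C$-regularized setting. First, fix a countable dense subset $\{y_{j}\}_{j\in\mathbb{N}}\subseteq E_{0}$, which exists by separability of $E$ and density of $E_{0}$. Then invoke the standard combinatorial lemma from \cite{bay1} (see also \cite{Bay}) to obtain pairwise disjoint subsets $A_{j}\subseteq\mathbb{N}$ with $\underline{d}(A_{j})>0$ for every $j$, and with the separation property $|k-\ell|\geq j+j'$ whenever $k\in A_{j}$, $\ell\in A_{j'}$ and $(k,j)\neq(\ell,j')$. This separation is exactly what will tame the cross terms in the later estimate.

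Next, define the candidate hypercyclic vector
$$
x := \sum_{j\in\mathbb{N}}\sum_{k\in A_{j}} S_{\lfloor m_{k}\rfloor}(\eps_{j} y_{j}),
$$
where each $\eps_{j}>0$ is a scalar chosen small enough that the outer sum converges absolutely in every seminorm $p_{n}$ of $E$; this is made possible by the uniform tail bounds from condition~(iii). Since each partial sum lies in $R(C)$, assumption~(v) together with the closedness of $C^{-1}S_{0}(t)$ and the local equicontinuity of $(S_{0}(t))_{t\geq 0}$ allows one to verify that $x\in D(\mathcal{G})=Z_{1}(A)$ and that $T(t)$ may be applied termwise to the defining series.

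The heart of the argument is to show that, for each fixed $j$ and each $k\in A_{j}$ with $k$ sufficiently large, $T(t_{0}\lfloor m_{k}\rfloor)x$ is close to $\eps_{j}y_{j}$. Decompose the double sum into four parts. The diagonal term $T(t_{0}\lfloor m_{k}\rfloor)S_{\lfloor m_{k}\rfloor}(\eps_{j}y_{j})$ tends to $\eps_{j}y_{j}$ by~(iv). The past terms over $\ell<k$ in $A_{j}$ are controlled uniformly by~(i), and the future terms over $\ell>k$ in $A_{j}$ are controlled uniformly by~(ii). The cross terms over $\ell\in A_{j'}$ with $j'\neq j$ are confined, thanks to the separation $|k-\ell|\geq j+j'$, to high-index tails of the series appearing in~(i)--(ii) applied to $y_{j'}$; the uniform unconditional convergence in those hypotheses then forces these contributions to be arbitrarily small for $k$ large. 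Consequently, for any open neighborhood $V$ of $\eps_{j}y_{j}$, the inclusion $T(t_{0}\lfloor m_{k}\rfloor)x\in V$ holds for all but finitely many $k\in A_{j}$, and the rescaled family $\{\eps_{j}y_{j}\}_{j\in\mathbb{N}}$ remains dense in $E$.

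Finally, to convert these discrete hits into $f$-frequent hypercyclicity of the semigroup, use the continuity of $t\mapsto T(t)x$ on $[0,\infty)$ to inflate each hit to an open interval of a fixed length $2\delta>0$ (shrinking $V$ if necessary). A straightforward counting estimate, keeping track of which times $t_{0}\lfloor m_{k}\rfloor$ lie in $[0,f(N)]$, then yields $\underline{d}_{f}(\{t\geq 0:T(t)x\in V\})\geq c\,\underline{d}(A_{j})>0$ for an explicit constant $c>0$ depending only on $t_{0}$ and $\delta$; the l-$(m_{k})$-frequent hypercyclicity of the single operator $T(t_{0})$ is the specialization of this estimate to the discrete sequence $n\mapsto T(nt_{0})x=T(t_{0})^{n}x$ evaluated at $n=\lfloor m_{k}\rfloor$. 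The principal obstacle, as I see it, is the rigorous justification that $x\in Z_{1}(A)$ together with the termwise action of $T(t)$ on the series defining $x$, since $T(t)=C^{-1}S_{0}(t)$ is only defined on the generally non-closed solution space $Z_{1}(A)$; this interchange of limits rests on the closedness of $C^{-1}$, the uniformity clauses in hypotheses~(i)--(iii), local equicontinuity of $(S_{0}(t))_{t\geq 0}$, and the density of $R(C)$ granted by~(v).
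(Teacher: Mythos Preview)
Your construction is essentially the unfolded version of what the paper does: the paper packages the Bayart--Grivaux combinatorics into a citation of an abstract l-$(m_k)$-hypercyclicity criterion for a sequence of operators $(T_n)_{n\in\mathbb N}\subseteq L([R(C)],E)$, $T_n:=T(n)$, and obtains a vector $x=Cy\in R(C)$ that is l-$(m_k)$-hypercyclic for this sequence (hence for $T(t_0)$). The inflation to continuous time is then the same in spirit as yours.

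The substantive difference is precisely where you flag the ``principal obstacle.'' The paper does \emph{not} try to show that the constructed vector lies in $Z_1(A)$ and then apply $T(t)$ termwise. Instead, it takes as the semigroup-hypercyclic vector the element $Cx=C^2y$ rather than $x$ itself. Since $Cx\in R(C)\subseteq Z_1(A)$ automatically, and since
\[
T(t)Cx=C^{-1}S_0(t)C^2y=S_0(t)Cy,
\]
the entire continuous-time estimate is carried out with the globally defined, locally equicontinuous family $(S_0(t))_{t\geq 0}$ acting on a fixed element $Cy\in E$. The inflation inequality then reduces to
\[
p_n\bigl(S_0(k+\delta)Cy-S_0(k)Cy\bigr)\leq c\,p_m\bigl(S_0(k)y-z\bigr)+p_n\bigl(S_0(\delta)z-Cz\bigr),
\]
which uses only boundedness of $S_0(\delta)$ on $E$ and strong continuity at $\delta=0$; no interchange of $T(t)$ with an infinite series, no closedness argument for $C^{-1}$, and no appeal to $x\in Z_1(A)$ is needed. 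Your route would work if you could justify that interchange, but it is genuinely delicate because convergence in hypothesis~(iii) is only in $E$, not in $[R(C)]$, so the partial sums need not be Cauchy for the stronger topology in which $T(t)$ is continuous. The paper's extra application of $C$ is the clean way around this, and it is the one idea you are missing.
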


\begin{proof}
Without loss of generality, we may assume that $t_{0}=1.$ It is clear that $(m_{k})$ is an increasing sequence in $[1,\infty).$ Define the sequence of operators $(T_{n})_{n\in {\mathbb N}}\subseteq L([R(C)], E)$
by $T_{n}x:=T(n)x,$ $n\in {\mathbb N},$ $x\in R(C).$ Due to \eqref{C-DS}, we get that $T_{n}x=T(1)^{n}x$ for $x\in R(C).$ Then the prescribed assumptions (i)-(iv) in combination with \cite[Theorem 3.1]{1211212018} imply that 
the sequence $(T_{n})_{n\in {\mathbb N}}$
is l-$(m_{k})$-frequently hypercyclic, which means that there exists an element
$x=Cy\in R(C),$ for some $y\in E,$ satisfying that for each open non-empty subset $V'$ of $E$ there exists an increasing sequence $(k_{n})$ of positive integers 
such that the interval $[1,f(k_{n})]$ contains at least $k_{n}c$ elements of set $\{ k\in {\mathbb N} : T_{k}Cy=S_{0}(k)y \in V'\}.$ Since 
$T_{n}\subseteq T(1)^{n},$ the above clearly implies that the operator $T(1)$ is l-$(m_{k})$-frequently hypercyclic with $x=Cy$ being its  l-$(m_{k})$-frequently hypercyclic vector.
We will prove that $Cx=C^{2}y$ is an $f$-frequently hypercyclic vector 
for $(S_{0}(t))_{t\geq 0}$, i.e., that for each open non-empty subset $V$ of $E$ we have $\underline{d}_{f}(\{ t\geq 0 :T(t)Cx=S_{0}(t)Cy \in V \bigr\})>0;$ see also the proof of \cite[Proposition 2.1]{man-peris}. Let such a set $V$ be given. Then, due to our assumption (v), there exist an element $z\in E$ and a positive integer $n\in {\mathbb N}$ such that $L_{n}(Cz,\epsilon) \subseteq V.$ By the local equicontinuity of $(S_{0}(t))_{t\geq 0}$ and the continuity of $C$, we get that there exist an integer $m\in {\mathbb N}$ and a positive constant $c>1$ such that $p_{n}(Cx)\leq c p_{m}(x),$ $x\in E$ and
\begin{align}
\notag p_{n}\bigl( & S _{0}(k+\delta)Cy-S_{0}(k)Cy\bigr) 
\\\notag &  \leq p_{n}\bigl(  S _{0}(k+\delta)Cy-Cz\bigr) +p_{n}\bigl(  S _{0}(k)Cy-Cz\bigr) 
\\\notag & \leq p_{n}\bigl(  S_{0}(\delta) \bigl[S _{0}(k)y-z\bigr]\bigr) 
+p_{n}\bigl( S_{0}(\delta)z-Cz \bigr)+p_{n}\bigl(  S _{0}(k)Cy-Cz\bigr) 
\\\label{daci-mile} & \leq cp_{m}\bigl(S_{0}(k)y-z\bigr) +p_{n}\bigl( S_{0}(\delta)z-Cz\bigr),\quad \delta \in [0,1].
\end{align}
Set $V':=L_{m}(z,\epsilon/3c).$ Then, by the foregoing, we know that there exists an increasing sequence $(k_{n})$ of positive integers 
such that the interval $[1,f(k_{n})]$ contains at least $k_{n}c$ elements of set ${\mathrm A}:=\{ k\in {\mathbb N} : T_{k}Cy=S_{0}(k)y \in V'\}.$ For any $k\in {\mathrm A},$ we have $p_{m}(S_{0}(k)y-z)<\epsilon/3c;$ further on, \eqref{daci-mile} yields that there exists a positive constant $\delta_{0}>0$ such that 
$p_{n}(  S _{0}(k+\delta)Cy-S_{0}(k)Cy) < 2\epsilon/3$ for all $\delta \in [0,\delta_{0}].$ This implies
\begin{align*}
p_{n}\bigl( & S _{0}(k+\delta)Cy-Cz\bigr)
\\ & \leq p_{n}\bigl(  S _{0}(k+\delta)Cy-S_{0}(k)Cy\bigr)+p_{n}\bigl( S_{0}(k)Cy-Cz\bigr)
<2\epsilon/3 +\epsilon/3=\epsilon ,
\end{align*} 
for any $k\in {\mathrm A}$ and $\delta \in [0,\delta_{0}].$
By virtue of this, we conclude that $S _{0}(k+\delta)Cy\in V$ for any $k\in {\mathrm A}$ and $\delta \in [0,\delta_{0}],$ finishing the proof of theorem in a routine manner.
\end{proof}

Plugging $f(t):=t^{q}+1,$ $t\geq 0$ ($q\geq 1$), we obtain a sufficient condition for $q$-frequent hypercyclicity of $(S_{0}(t))_{t\geq 0}$  and $T(t_{0}).$

\begin{rem}\label{prcko-tres}
Consider the situation of Theorem \ref{oma} with $A$ being a subgenerator of a global $\alpha$-times integrated $C$-semigroup $(S_{\alpha}(t))_{t\geq 0}$ on $X,$ $n \geq \lceil \alpha \rceil$  and the Fr\'echet space $[C(D(A^{n}))]$ being separable. 
It is well known that
$C(D(A^{n}))\subseteq Z_{1}(A);$ if $x=Cy\in C(D(A^{n})),$ then for every $t\geq 0,$
\begin{align*}
G\bigl( \delta_{t}  \bigr)x=\frac{d^{n}}{dt^{n}}C^{-1}S_{n}(t)x=\frac{d^{n}}{dt^{n}}S_{n}(t)y
= S_{n}(t)A^{n}y+\sum \limits_{i=0}^{n-1}\frac{t^{n-i-1}}{(n-i-1)!}CA^{n-1-i}y.
\end{align*}
Furthermore, for every $t\geq 0,$ the mapping $G(\delta_{t}) : [C(D(A^{n}))] \rightarrow X$ is linear and continuous as well as the operator family $(G(\delta_{t}))_{t\geq 0}\subseteq L([C(D(A^{n}))] , X)$ is strongly continuous; see e.g.
the proof of \cite[Theorem 5.4]{266}. But, it is not clear how to prove that the  l-$(m_{k})$-frequent hypercyclicity of a single operator $G(\delta_{t_{0}})_{| C(D(A^{n}))},$ for some $t_{0}>0,$ implies the l-$(m_{k})$-frequent hypercyclicity of $(S_{n}(t))_{t\geq 0}$ because an analogue of the estimate \eqref{daci-mile} seems to be not attainable for integrated $C$-semigroups. The interested reader may try to prove an analogue of \cite[Theorem 3.1.32]{knjigah} for l-$(m_{k})$-frequent hypercyclicity. 
\end{rem}

Suppose now that ${\mathcal F}\in P(P({\mathbb N}))$ and ${\mathcal F}\neq \emptyset.$ If ${\mathcal F}$ satisfies the following property:
\begin{itemize}
\item[(I)] $A\in{\mathcal F}$ and $A\subseteq B$ imply $B\in{\mathcal F},$ 
\end{itemize}
then it is said that 
${\mathcal F}$ is a Furstenberg family; a proper Furstenberg family ${\mathcal F}$ is any Furstenberg family satisfying that $\emptyset \notin {\mathcal F}.$ See \cite{furstenberg} for more details.

From the proof of Theorem \ref{oma}, we may deduce the following:

\begin{prop}\label{sot}
Let ${\mathcal F}$ be a Furstenberg family.
Suppose that $A$ is a subgenerator of a global $C$-regularized semigroup $(S_{0}(t))_{t\geq 0}$ on $E$ and $T(t)x:=C^{-1}S_{0}(t)x,$ $t\geq 0,$ $x\in Z_{1}(A).$ 
If $R(C)$ is dense in $E,$ $t_{0}>0$ and $x\in  Z_{1}(A)$ is an ${\mathcal F}$-hypercyclic element of $T(t_{0}),$ then $x$ is an ${\mathcal F}'$-hypercyclic element of $(S_{0}(t))_{t\geq 0},$ where
\begin{align}\label{profica}
{\mathcal F}'=\Biggl\{ B\subseteq [0,\infty) : (\exists A\in {\mathcal F})\, (\exists \delta_{0}>0)\, \bigcup_{k\in A}[k,k+\delta_{0}] \subseteq B \Biggr\}.
\end{align}
\end{prop}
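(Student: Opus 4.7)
The plan is to recycle the thickening argument from the proof of Theorem \ref{oma}: the discrete $\mathcal{F}$-frequent visits of the orbit $(G(\delta_{kt_{0}})x)_{k\in\mathbb{N}}$ to any prescribed open subset of $E$ will be upgraded to continuous-time $\mathcal{F}'$-frequent visits of $(G(\delta_{t})x)_{t\ge 0}$ by enlarging each discrete instant $kt_{0}$ to a short interval $[kt_{0},kt_{0}+\delta_{0}]$. The enlargement is made uniform in $k$ through the density of $R(C)$ in $E$, the local equicontinuity of $(S_{0}(t))_{t\ge 0}$, and the strong continuity $S_{0}(\delta)z\to Cz$ as $\delta\to 0^{+}$.

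After the routine reduction to $t_{0}=1$ (as in the opening of the proof of Theorem \ref{oma}), I fix an open non-empty $V\subseteq E$ and, using density of $R(C)$, pick $z\in E$ with $Cz\in V$ and then $n\in\mathbb{N}$, $\epsilon>0$ with $L_{n}(Cz,\epsilon)\subseteq V$. Local equicontinuity of $(S_{0}(t))_{t\ge 0}$ on $[0,1]$ together with continuity of $C$ supplies $m\in\mathbb{N}$ and $c>1$ such that $p_{n}(S_{0}(\delta)v)\le c\,p_{m}(v)$ and $p_{n}(Cv)\le c\,p_{m}(v)$ for every $v\in E$ and $\delta\in[0,1]$. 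Setting $V':=L_{m}(Cz,\epsilon/(3c))$, the $\mathcal{F}$-hypercyclicity of $x$ for $T(1)$ puts
\[
A:=\bigl\{k\in\mathbb{N}:G(\delta_{k})x\in V'\bigr\}
\]
in $\mathcal{F}$. For each $k\in A$ and $\delta\in[0,1]$, I invoke the semigroup relation $G(\delta_{k+\delta})x=G(\delta_{\delta})G(\delta_{k})x$ from \eqref{C-DS} together with the identity $CG(\delta_{\delta})u=S_{0}(\delta)u$ (valid for $u\in Z_{1}(A)$), and with $u:=G(\delta_{k})x$ reproduce the chain of triangle inequalities leading to \eqref{daci-mile}. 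The outcome is an estimate of the shape
\[
p_{n}\bigl(G(\delta_{k+\delta})x-Cz\bigr)\le c\,p_{m}\bigl(G(\delta_{k})x-Cz\bigr)+p_{n}\bigl(S_{0}(\delta)z-Cz\bigr),
\]
so after choosing $\delta_{0}\in(0,1]$ with $p_{n}(S_{0}(\delta)z-Cz)<\epsilon/3$ for all $\delta\in[0,\delta_{0}]$, I obtain $G(\delta_{k+\delta})x\in V$ for every $k\in A$ and $\delta\in[0,\delta_{0}]$. Hence $\bigcup_{k\in A}[k,k+\delta_{0}]\subseteq S(x,V)$, so $S(x,V)\in\mathcal{F}'$ by \eqref{profica}, completing the argument.

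The delicate point is reproducing the estimate \eqref{daci-mile} in this slightly broader setting. In Theorem \ref{oma} the hypercyclic vector was the special element $Cy\in R(C)$, which allowed the cross term $S_{0}(k+\delta)(Cy)=S_{0}(\delta)S_{0}(k)y$ to be manipulated entirely through the continuous operators $S_{0}(\cdot)$ and $C$; here the vector $x$ belongs only to $Z_{1}(A)$, so a naive analogue would involve the discontinuous $C^{-1}$. The saving grace is that for $k\in A$ the orbit point $u:=G(\delta_{k})x$ is itself $p_{m}$-close to $Cz\in R(C)$, so one splits $[S_{0}(\delta)-C]u=[S_{0}(\delta)-C](u-Cz)+[S_{0}(\delta)-C]Cz$, bounds the first piece via local equicontinuity of $(S_{0}(t))$ and continuity of $C$, and rewrites the second via $[S_{0}(\delta)-C]Cz=C[S_{0}(\delta)z-Cz]$ which is small by the strong continuity of $(S_{0}(t))$ at zero. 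At no stage is continuity of $C^{-1}$ required.
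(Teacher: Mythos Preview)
Your thickening strategy is the right one, and you have correctly isolated the delicate point, but the splitting you propose does not close the gap. The identity $CG(\delta_{\delta})u=S_{0}(\delta)u$ means that your decomposition
\[
[S_{0}(\delta)-C]u=[S_{0}(\delta)-C](u-Cz)+C\bigl[S_{0}(\delta)z-Cz\bigr]
\]
is an identity for $C\bigl[G(\delta_{\delta})u-u\bigr]$, not for $G(\delta_{\delta})u-u$ itself. To recover the latter you must apply $C^{-1}$, and then the ``first piece'' becomes $C^{-1}[S_{0}(\delta)-C](u-Cz)$. Local equicontinuity of $(S_{0}(t))_{t\ge0}$ together with continuity of $C$ controls $[S_{0}(\delta)-C](u-Cz)$, but not $C^{-1}$ applied to it. Equivalently, writing $G(\delta_{k+\delta})x-Cz=G(\delta_{\delta})(u-Cz)+\bigl[S_{0}(\delta)z-Cz\bigr]$, your displayed estimate would require $p_{n}\bigl(G(\delta_{\delta})(u-Cz)\bigr)\le c\,p_{m}(u-Cz)$ uniformly in $\delta\in[0,1]$, i.e.\ local equicontinuity of $G(\delta_{\delta})=C^{-1}S_{0}(\delta)$ on $Z_{1}(A)$ in the topology inherited from $E$. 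That is precisely the continuity of $C^{-1}$ you claim to avoid.

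What the argument extracted from the proof of Theorem~\ref{oma} actually yields is the ${\mathcal F}'$-hypercyclicity of $Cx$, not of $x$. For any $x\in Z_{1}(A)$ one has $G(\delta_{k+\delta})(Cx)=S_{0}(k+\delta)x=S_{0}(\delta)T(k)x$ (apply $C$ to $G(\delta_{k+\delta})x=G(\delta_{\delta})G(\delta_{k})x$ and use $CG(\delta_{s})=S_{0}(s)$ on $Z_{1}(A)$), and then the splitting
\[
S_{0}(\delta)T(k)x-Cz=S_{0}(\delta)\bigl[T(k)x-z\bigr]+\bigl[S_{0}(\delta)z-Cz\bigr]
\]
reproduces \eqref{daci-mile} with $V':=L_{m}(z,\epsilon/3c)$ centred at $z$ rather than at $Cz$; only the bounded operators $S_{0}(\delta)$ and $C$ appear. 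Your version centres $V'$ at $Cz$ and insists on keeping $x$ itself as the semigroup vector; this forces the unbounded map $G(\delta_{\delta})$ onto the small difference $u-Cz$, and the estimate no longer follows.
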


An upper Furstenberg family is any proper Furstenberg family ${\mathcal F}$ satisfying the following two conditions:
\begin{itemize}
\item[(II)] There are a set $D$ and a countable set $M$ such that ${\mathcal F}=\bigcup_{\delta \in D} \bigcap_{\nu \in M}{\mathcal F}_{\delta,\nu},$ where for each $\delta \in D$ and $\nu \in M$ the following holds: If $A\in {\mathcal F}_{\delta,\nu},$ then there exists a
finite subset $F\subseteq {\mathbb N}$ such that the implication $A\cap F \subseteq B \Rightarrow B\in {\mathcal F}_{\delta,\nu}$ holds true.
\item[(III)] If $A\in {\mathcal F},$ then there exists $\delta \in D$ such that, for every $n\in {\mathbb N},$ we have $A-n\equiv \{k-n: k\in A,\ k>n\}\in {\mathcal F}_{\delta},$ where ${\mathcal F}_{\delta}\equiv \bigcap_{\nu \in M}{\mathcal F}_{\delta,\nu}.$
\end{itemize} 

Appealing to \cite[Theorem 22]{boni-upper} in place of \cite[Theorem 3.1]{1211212018}, and repeating almost literally the arguments given in the proof of Theorem \ref{oma}, we may deduce the following result:

\begin{thm}\label{oma-duo}
Suppose that ${\mathcal F}=\bigcup_{\delta \in D} \bigcap_{\nu \in M}{\mathcal F}_{\delta,\nu}$ is an upper Furstenberg family and $A$ is a subgenerator of a global $C$-regularized semigroup $(S_{0}(t))_{t\geq 0}$ on $E.$ Set $T(t)x:=C^{-1}S_{0}(t)x,$ $t\geq 0,$ $x\in Z_{1}(A).$
Suppose that there are a number $t_{0}>0,$ two dense subsets $E_{0}'$ and $E_{0}''$ of $E$ and mappings $S_{n} : E_{0}'' \rightarrow R(C)$ ($n\in {\mathbb N}$) such that
for any $y\in E_{0}''$ and $\epsilon >0$ there exist $A\in{\mathcal F}$ and $\delta \in D$ such that:
\begin{itemize} 
\item[(i)] For every $x\in E_{0}',$ there exists some $B\in {\mathcal F}_{\delta},$ $B\subseteq A$ such that, for every $n\in B,$ one has $S_{0}(t_{0}n)x\in L(0,\epsilon).$
\item[(ii)] The series $\sum_{n \in A}S_{n}y$ converges.
\item[(iii)] For every $m\in A,$ we have $T(mt_{0})\sum_{n \in A}S_{n}y-y\in L(0,\epsilon).$
\item[(iv)] $R(C)$ is dense in $E.$
\end{itemize}
Then the operator $T(t_{0})$ is ${\mathcal F}$-hypercyclic and $(S_{0}(t))_{t\geq 0}$ is ${\mathcal F}'$-hypercyclic, where ${\mathcal F}'$ is given by \eqref{profica}.  
\end{thm}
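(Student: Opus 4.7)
The plan is to follow the same two-step architecture as the proof of Theorem \ref{oma}: first reduce to a discrete statement on $R(C)$ and apply the upper Furstenberg analogue of the criterion (that is, \cite[Theorem 22]{boni-upper}) to obtain $\mathcal F$-hypercyclicity of the single operator $T(t_0)$; then promote the discrete conclusion to an $\mathcal F'$-hypercyclicity statement for the continuous family $(S_0(t))_{t\geq 0}$ by means of a local-equicontinuity trick. Normalizing $t_0 = 1$ and setting $T_n x := T(n)x = T(1)^n x$ for $x\in R(C)$, the functional equation $S_0(t)S_0(s)=S_0(t+s)C$ gives $T_n\subseteq T(1)^n$, so that any $\mathcal F$-hypercyclic vector of the sequence $(T_n)$ in $L([R(C)],E)$ is automatically an $\mathcal F$-hypercyclic vector of $T(1)$.

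The second step is to check that hypotheses (i)--(iv) translate into the ones required by \cite[Theorem 22]{boni-upper}: density of $E_0'$ and $E_0''$, the existence for each $y\in E_0''$ and each $\eps>0$ of $A\in\mathcal F$ and $\delta\in D$ witnessing the approximate orbits, and the fact that the inverse maps $S_n$ take values in $R(C)$ (so that we may work in $[R(C)]$ and apply $T_n=T(1)^n$). Clause (i) will play the role of the small-orbit condition with decomposition $B\subseteq A$, $B\in\mathcal F_\delta$; clauses (ii)--(iii) say that $\sum_{n\in A}S_n y$ produces a vector in $R(C)$ whose $T_m$-image for $m\in A$ is close to $y$; clause (iv) is density of $R(C)$, which allows us to choose the relevant test vectors $z$ in the final upgrade. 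This yields a vector $Cy\in R(C)$ that is $\mathcal F$-hypercyclic for $T(1)$.

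The promotion step copies the final portion of the proof of Theorem \ref{oma} verbatim. Given an open non-empty $V\subseteq E$, pick $z\in E$, $n\in\NN$ and $\eps>0$ with $L_n(Cz,\eps)\subseteq V$. Local equicontinuity of $(S_0(t))_{t\geq 0}$ and continuity of $C$ yield $m\in\NN$ and $c>1$ such that $p_n(Cx)\le c\,p_m(x)$ and, exactly as in \eqref{daci-mile},
\begin{equation*}
p_n\bigl(S_0(k+\delta)Cy-Cz\bigr)\le c\,p_m\bigl(S_0(k)y-z\bigr)+p_n\bigl(S_0(\delta)z-Cz\bigr)+p_n\bigl(S_0(k)Cy-Cz\bigr),
\end{equation*}
for $\delta\in[0,1]$. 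Strong continuity at $0$ supplies $\delta_0>0$ making the middle term small, while $\mathcal F$-hypercyclicity of $Cy$ for $T(1)$ applied to $V':=L_m(z,\eps/3c)$ forces the set $A:=\{k\in\NN: S_0(k)y\in V'\}$ to belong to $\mathcal F$. A standard triangle-inequality bookkeeping then gives $\bigcup_{k\in A}[k,k+\delta_0]\subseteq S(C^2 y,V)$, so the orbit time set lies in $\mathcal F'$ as defined in \eqref{profica}.

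The main obstacle I anticipate is not in the semigroup-to-single-operator transition, which is essentially mechanical once one has \eqref{daci-mile}, but in verifying that conditions (i)--(iv) really match the hypotheses of \cite[Theorem 22]{boni-upper} under the family decomposition $\mathcal F=\bigcup_{\delta\in D}\bigcap_{\nu\in M}\mathcal F_{\delta,\nu}$; in particular, one must be careful that the set $A\in\mathcal F$ and the refinement $B\in\mathcal F_\delta$ from (i) are produced uniformly enough in $x\in E_0'$ so that the conclusion of \cite[Theorem 22]{boni-upper} (rather than a weaker variant) applies. Once this bookkeeping is done, no new estimate beyond \eqref{daci-mile} is needed.
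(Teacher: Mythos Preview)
Your proposal is correct and follows essentially the same approach as the paper: the paper's entire proof of Theorem~\ref{oma-duo} consists of the single sentence ``Appealing to \cite[Theorem 22]{boni-upper} in place of \cite[Theorem 3.1]{1211212018}, and repeating almost literally the arguments given in the proof of Theorem~\ref{oma},'' which is exactly your two-step architecture (discretize and apply the upper Furstenberg criterion, then promote via the estimate \eqref{daci-mile}). Your identification of the $\mathcal F'$-hypercyclic vector as $C^{2}y$ and the containment $\bigcup_{k\in A}[k,k+\delta_{0}]\subseteq S(C^{2}y,V)$ is precisely the mechanism behind Proposition~\ref{sot}, so nothing further is needed.
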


\begin{rem}\label{special}
Collection of all non-empty subsets $A\subseteq [0,\infty)$ for which $\overline{d}_{qc}(A)>0$ forms an upper Furstenberg family (\cite{boni-upper}, \cite{1211212018}), so that Theorem \ref{oma-duo} with $f(t)=t^{q}+1,$ $t\geq 0$ ($q\geq 1$) gives a sufficient condition for the upper $q$-frequent hypercyclicity of $(S_{0}(t))_{t\geq 0}$  and $T(t_{0}).$
It can be simply proved that the validity of condition $\limsup_{t\rightarrow \infty}\frac{f(t)}{t}>0$
for an increasing function $f  : [0,\infty) \rightarrow [1,\infty)$ implies that the collection of all non-empty subsets $A\subseteq [0,\infty)$ such that $\overline{d}_{f}(A)>0$ forms an upper Furstenberg family, as well.
\end{rem}

We continue by stating two intriguing consequences of 
Theorem \ref{oma}. The first one is motivated by the well-known results of S. El Mourchid \cite[Theorem 2.1]{samir} and E. M. Mangino, A. Peris \cite[Corollary 2.3]{man-peris}; see also \cite[Theorem 3.1.40]{knjigah}.

\begin{thm}\label{3.1.4.13}
Let $t_0>0,$ let ${\mathbb K}={\mathbb C},$ and let $A$ be a subgenerator of a global $C$-regularized semigroup $(S_{0}(t))_{t\geq 0}$ on $E.$ Suppose that $R(C)$ is dense in $E.$ Set $T(t)x:=C^{-1}S_{0}(t)x,$ $t\geq 0,$ $x\in Z_{1}(A).$

\emph{(i)} 
Assume that there exists a family $(f_{j})_{j\in \Gamma}$ of locally bounded measurable mappings $f_{j} : I_{j} \rightarrow E$ such that $I_{j}$ is an interval in ${\mathbb R}$, $Af_{j}(t) = itf_{j}(t)$ for every
$t \in I_{j} ,$ $ j \in \Gamma$ and span$\{f_{j}(t) : j \in \Gamma,\ t \in I_{j}\}$ is dense in $E.$
If $f_{j} \in C^{2}(I_{j} : X)$ for every $j \in \Gamma,$
then $(S_{0}(t))_{t\geq 0}$ is frequently hypercyclic
and each single operator $T(t_0)$ is frequently hypercyclic.
\smallskip

\emph{(ii)} Assume that there exists a family $(f_{j})_{j\in \Gamma}$ of twice continuously differentiable mappings $f_{j} : I_{j} \rightarrow E$ such that $I_{j}$ is an interval in ${\mathbb R}$ and $Af_{j}(t) = itf_{j}(t)$ for every
$t \in I_{j} ,$ $ j \in \Gamma .$ 
Set $\tilde{E}:=\overline{span\{f_{j}(t) : j \in \Gamma,\ t \in I_{j}\}}$.
Then $A_{|\tilde{E}}$ is a subgenerator of a global $C_{|\tilde{E}}$-regularized semigroup $(S_{0}(t)_{| \tilde{E}})_{t\geq 0}$ on $\tilde{E},$
$(S_{0}(t)_{|\tilde{E}})_{t\geq 0}$ is frequently hypercyclic in $\tilde{E}$ and the operator $T(t_{0})_{|\tilde{E}}$
is frequently hypercyclic in $\tilde{E}.$
\end{thm}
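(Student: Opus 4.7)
Both parts follow from Theorem \ref{oma} applied with the increasing function $f(t):=t+1,$ so that $m_{k}=k+1$ and both $f$-frequent hypercyclicity and l-$(m_{k})$-frequent hypercyclicity coincide with the ordinary frequent hypercyclicity of $(S_{0}(t))_{t\geq 0}$ and of $T(t_{0}).$ The construction is modelled on the eigenvector argument of El Mourchid \cite{samir} and Mangino--Peris \cite{man-peris}, adapted to the $C$-regularized framework.

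For part (i), I would take $E_{0}$ to be the linear span of the vectors
\[
y_{\varphi,j}:=C\!\int_{I_{j}}\varphi(t)\,f_{j}(t)\,dt,\qquad \varphi\in\mathcal{D}(\mathring{I}_{j}),\ j\in\Gamma,
\]
which automatically lie in $R(C).$ Mollifier approximation gives $Cf_{j}(t_{0})\in\overline{E_{0}}$ for each $j\in\Gamma$ and $t_{0}\in\mathring I_{j};$ combined with the continuity of $C$ and the density of $\mathrm{span}\{f_{j}(t)\}$ in $E$ one obtains $R(C)\subseteq\overline{E_{0}},$ and density of $E_{0}$ in $E$ then follows because $R(C)$ is dense by hypothesis. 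Since $Af_{j}(t)=itf_{j}(t)$ and $f_{j}\in C^{2},$ the function $s\mapsto e^{its}f_{j}(t)$ is a classical solution of $(ACP_{1})$ with initial value $f_{j}(t),$ so $f_{j}(t)\in Z_{1}(A)$ and $S_{0}(s)f_{j}(t)=e^{its}Cf_{j}(t).$ Setting
\[
S_{n}y_{\varphi,j}:=C\!\int_{I_{j}}\varphi(t)\,e^{-int_{0}t}\,f_{j}(t)\,dt\in R(C)
\]
and invoking the identity $T(s)(Cx)=S_{0}(s)x,$ valid for every $x\in E,$ one computes
\[
T(kt_{0})\,S_{n}y_{\varphi,j}=C\!\int_{I_{j}}\varphi(t)\,e^{i(k-n)t_{0}t}\,f_{j}(t)\,dt,
\]
which equals $y_{\varphi,j}$ exactly when $k=n,$ so condition (iv) holds with equality. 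Two integrations by parts in $t,$ exploiting that $\varphi\in C_{c}^{\infty}(\mathring I_{j})$ vanishes at the endpoints and $f_{j}\in C^{2}(I_{j};E),$ yield
\[
p\bigl(T(kt_{0})S_{n}y_{\varphi,j}\bigr)\leq\frac{M_{p,\varphi,j}}{(k-n)^{2}t_{0}^{2}}\qquad(k\neq n)
\]
for every continuous seminorm $p$ on $E;$ since $\sum_{k\neq n}(k-n)^{-2}<\infty,$ conditions (i)--(iii) of Theorem \ref{oma} follow with absolute -- hence unconditional -- convergence, uniformly in the relevant index. Theorem \ref{oma} now delivers part (i).

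For part (ii), I would first verify that $\tilde E$ is invariant under $C$ and under each $S_{0}(t):$ $Cf_{j}(t)\in\tilde E$ because $C$ commutes with $A,$ while $S_{0}(u)f_{j}(t)=e^{itu}Cf_{j}(t)\in\tilde E$ by the mild-solution identity above; closedness of $\tilde E$ and continuity of the operators extend the invariance to the full space. Consequently $(S_{0}(t)_{|\tilde E})_{t\geq 0}$ is a $C_{|\tilde E}$-regularized semigroup on $\tilde E$ with subgenerator $A_{|\tilde E},$ and the same mollifier argument applied to $Cf_{j}(t)\in\tilde E$ shows that $R(C_{|\tilde E})$ is dense in $\tilde E.$ The family $(f_{j})$ satisfies the density hypothesis of part (i) inside $\tilde E$ by the very definition of $\tilde E,$ so part (i) applied within $\tilde E$ concludes the proof. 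The principal obstacle throughout is the two-step integration by parts that converts the $C^{2}$ regularity of the eigenvectors into the $1/(k-n)^{2}$ decay required for unconditional summability of the three series in the criterion -- this is exactly what forces the $C^{2}$ hypothesis; the $C$-regularized bookkeeping is essentially automatic once the factor $C$ is built into the definition of $E_{0},$ which makes the identities $S_{n}y\in R(C)$ and $T(nt_{0})S_{n}y=y$ hold without any residual shift.
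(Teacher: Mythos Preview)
Your proposal is correct and follows essentially the same route as the paper: construct a dense set $E_{0}\subseteq R(C)$ from integrals of the eigenvector maps against compactly supported smooth data, define $S_{n}$ by modulating with $e^{-int_{0}t}$, obtain the $1/(k-n)^{2}$ decay via two integrations by parts, and feed this into Theorem~\ref{oma} with $m_{k}\sim k$. The paper first replaces the $f_{j}$ by compactly supported $C^{2}$ maps $g_{j}$ (as in Mangino--Peris) and works with their Fourier transforms $\psi_{r,\lambda}$, whereas you integrate the original $f_{j}$ directly against test functions $\varphi\in\mathcal{D}(\mathring I_{j})$; this is only a cosmetic reparametrization of the same construction.
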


\begin{proof}
Consider first the statement (i). Arguing as in the Banach space case \cite[Corollary 2.3]{man-peris}, we get there exists a family 
$(g_{j})_{j\in \Lambda}$ of functions $g_{j}\in  C^{2}({\mathbb R} : E)$ with
compact support such that $Ag_{j}(t) = itg_{j}(t)$ for every $t\in {\mathbb R},$ $j\in \Lambda$ and
$span\{g_{j}(t) : j\in \Lambda,\ t\in {\mathbb R}\}$ is dense in $E.$ 
 For every $\lambda \in \Lambda$ and $r\in {\mathbb R},$ set $\psi_{r,\lambda}:=\int_{-\infty}^{\infty}e^{-irs}g_{\lambda}( s)\, ds.$
Then we have 
\begin{align}\label{jednazba}
T(t)\psi_{r,\lambda}=\psi_{r-t,\lambda},\quad t\geq 0,\ r\in {\mathbb R},\ \lambda \in \Lambda
\end{align} 
and the part (i) follows by applying Theorem \ref{oma} with the sequence $m_{k}:=k$ ($k\in {\mathbb N}$),
$E_{0}:=C(span\{g_{j}(t) : j\in \Lambda,\ t\in {\mathbb R}\})$ and the operator $S_{n}: E_{0}\rightarrow R(C)$ given by 
$S_{n}(C\psi_{r,\lambda}):=C\psi_{t_{0}n+r,\lambda}$ ($n\in {\mathbb N},$ $r\in {\mathbb R},$ $\lambda \in \Lambda$) and after that linearly extended to $E_{0}$ in the obvious way; here, it is only worth noting that the conditions (i)-(iii) follow from \eqref{jednazba} and the fact that the series $\sum_{n=1}^{\infty}\psi_{t_{0}n+r,\lambda}$ and $\sum_{n=1}^{\infty}\psi_{-t_{0}n+r,\lambda}$ converge absolutely (and therefore, unconditionally) since for each seminorm $p_{l}(\cdot),$ where $l\in {\mathbb N},$ 
there exists a finite constant $c_{l}>0$ such that $p_{l}(\psi_{t_{0}n+r,\lambda})+p_{l}(\psi_{-t_{0}n+r,\lambda}) \leq c_{l}n^{-2},$ $n\in {\mathbb N}$ ($r\in {\mathbb R},$ $\lambda \in \Lambda$). This can be seen by applying integration by parts twice, as in the proof of \cite[Lemma 9.23(b)]{Grosse}.
For the proof of (ii), it is enough to observe that an elementary argumentation shows that $A_{|\tilde{E}}$ is a subgenerator of a global $C_{|\tilde{E}}$-regularized semigroup $(S_{0}(t)_{| \tilde{E}})_{t\geq 0}$ on $\tilde{E}.$ Then we can apply (i) to finish the proof.
\end{proof}

The following application of Theorem \ref{3.1.4.13} is quite illustrative ($C=I$):

\begin{example}\label{apa}
Consider the operator $A:=d/dt,$ acting with maximal domain in the Banach space $E:=BUC({\mathbb R}),$ consisting of all bounded uniformly continuous functions.  Then $\sigma_{p}(A)=i{\mathbb R}$ and
$Ae^{\lambda \cdot}=\lambda e^{\lambda \cdot},$ $\lambda \in i{\mathbb R}.$ It is well-known that the space $\tilde{E}:=\overline{span\{e^{\lambda \cdot} : \lambda \in i{\mathbb R}\}}$ coincide with the space of all almost-periodic functions $AP({\mathbb R});$ see \cite{diagana} and \cite{gaston} for more details on the subject.
Due to Theorem \ref{3.1.4.13}(ii), we have that the translation semigroup $(T(t))_{t\geq 0}$ is frequently hypercyclic in $AP({\mathbb R} )$ and, for every $t>0$, the operator $T(t)$ is frequently hypercyclic in $AP({\mathbb R})$; the same holds if frequent hypercyclicity is replaced with Devaney chaoticity or topologically mixing property (\cite{knjigah}). We can similarly prove that the translation semigroup is frequently hypercyclic in the Fr\'echet space $C({\mathbb R} )$ and that, for every $t>0$, the translation operator $f\mapsto f(\cdot +t),$ $f\in C({\mathbb R})$ is frequently hypercyclic in $C({\mathbb R})$.
\end{example}

The subsequent version of Desch-Schappacher-Webb criterion for frequent hypercyclicity can be proved similarly; it is, actually, a simple consequence of Theorem \ref{3.1.4.13} (see also \cite[Theorem 3.1.36]{knjigah}). 

\begin{thm}\label{3018} 
Let $t_0>0,$ let ${\mathbb K}={\mathbb C},$ and let $A$ be a subgenerator of a global $C$-regularized semigroup $(S_{0}(t))_{t\geq 0}$ on $E.$ Suppose that $R(C)$ is dense in $E.$ Set $T(t)x:=C^{-1}S_{0}(t)x,$ $t\geq 0,$ $x\in Z_{1}(A).$

\emph{(i)} 
Assume that there exists an open connected subset $\Omega$ of $\mathbb{C}$,
which satisfies $\sigma_p(A)\supseteq\Omega$ and intersects the imaginary axis,
and $f:\Omega\to E$ is an analytic mapping satisfying $f(\lambda)\in N (A-\lambda)\setminus\{0\}$,
$\lambda\in\Omega$.
Assume, further, that $(x^*\circ f)(\lambda)=0$, $\lambda\in\Omega$,
for some $x^*\in E^*$, implies $x^*=0$.
Then $(S_{0}(t))_{t\geq 0}$ is frequently hypercyclic
and each single operator $T(t_0)$ is frequently hypercyclic.

\smallskip
\emph{(ii)}
Assume that there exists an open connected subset $\Omega$ of $\mathbb{C}$,
which satisfies $\sigma_p(A)\supseteq\Omega$ and intersects the imaginary axis,
and $f:\Omega\to E$ is an analytic mapping satisfying $f(\lambda)\in N(A-\lambda)\setminus\{0\}$,
$\lambda\in\Omega$.
Put $E_0:=span\{f(\lambda):\lambda\in\Omega\}$ and $\tilde{E}:=\overline{E_0}$.
Then $A_{|\tilde{E}}$ is a subgenerator of a global $C_{|\tilde{E}}$-regularized semigroup $(S_{0}(t)_{| \tilde{E}})_{t\geq 0}$ on $\tilde{E},$
$(S_{0}(t)_{|\tilde{E}})_{t\geq 0}$ is frequently hypercyclic in $\tilde{E}$ and the operator $T(t_{0})_{|\tilde{E}}$
is frequently hypercyclic in $\tilde{E}.$
\end{thm}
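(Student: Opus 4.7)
The plan is to reduce both parts of the theorem to Theorem \ref{3.1.4.13}. For part (i), since $\Omega$ is open and intersects the imaginary axis, I may choose a nondegenerate open interval $I\subseteq \mathbb{R}$ such that $iI := \{it : t\in I\} \subseteq \Omega$. Setting $g(t) := f(it)$ for $t\in I$ yields an analytic (hence $C^{\infty}$, in particular $C^{2}$) mapping $I\to E$ that satisfies $Ag(t)=itg(t)$ by the eigenvector property of $f$. This is precisely the type of eigenvector family required by Theorem \ref{3.1.4.13}(i), so it remains only to verify that $\operatorname{span}\{g(t):t\in I\}$ is dense in $E$.

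The density check is a routine Hahn-Banach / identity theorem argument. Suppose $x^{\ast}\in E^{\ast}$ vanishes on every $g(t)=f(it)$, $t\in I$. The scalar function $F(\lambda):=x^{\ast}(f(\lambda))$ is holomorphic on $\Omega$ (composition of the analytic $f$ with a continuous linear functional) and vanishes on $iI$, a set with accumulation points inside the open connected domain $\Omega$. By the identity theorem, $F\equiv 0$ on $\Omega$, and then the hypothesis $x^{\ast}\circ f\equiv 0 \Rightarrow x^{\ast}=0$ forces $x^{\ast}=0$. Hence $\operatorname{span}\{g(t):t\in I\}$ is dense in $E$, and Theorem \ref{3.1.4.13}(i) delivers the conclusion.

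For part (ii), the first step is to verify that $A_{|\tilde{E}}$ is a subgenerator of a global $C_{|\tilde{E}}$-regularized semigroup $(S_{0}(t)_{|\tilde{E}})_{t\geq 0}$ on $\tilde{E}$. This requires invariance of $\tilde{E}$ under both $C$ and $S_{0}(t)$. From $CA\subseteq AC$ one obtains $ACf(\lambda)=\lambda Cf(\lambda)$, and from uniqueness of the mild solution of $(ACP_{1})$ with initial value $f(\lambda)$ one derives the relation $S_{0}(t)f(\lambda)=e^{\lambda t}Cf(\lambda)$; together these yield the required invariance of $\overline{E_{0}}=\tilde{E}$. Once the restricted $C_{|\tilde{E}}$-regularized semigroup is in place, apply part (i) to it. The auxiliary hypothesis from (i) now holds automatically: if $\tilde{x}^{\ast}\in \tilde{E}^{\ast}$ satisfies $\tilde{x}^{\ast}\circ f\equiv 0$ on $\Omega$, then $\tilde{x}^{\ast}$ vanishes on $E_{0}$, which is dense in $\tilde{E}$ by construction, so $\tilde{x}^{\ast}=0$.

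The main obstacle will be the verification of the invariance step in (ii). Since $Cf(\lambda)$ a priori only lies in $N(A-\lambda)$, and this kernel may be higher-dimensional, it is not a formal triviality that $Cf(\lambda)\in \operatorname{span}\{f(\mu):\mu\in\Omega\}$; one must argue via the eigenrelation $S_{0}(t)f(\lambda)=e^{\lambda t}Cf(\lambda)$ and the closedness of $\tilde{E}$, following the standard Desch-Schappacher-Webb template (cf.\ \cite[Theorem 3.1.36]{knjigah}). Once this is granted, everything else is a transcription of the arguments used for Theorem \ref{3.1.4.13}.
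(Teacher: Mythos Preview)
Your proposal is correct and follows exactly the route the paper indicates: the paper's entire ``proof'' of Theorem~\ref{3018} is the sentence that it ``is, actually, a simple consequence of Theorem~\ref{3.1.4.13} (see also \cite[Theorem 3.1.36]{knjigah}),'' and your argument---restrict the analytic eigenvector map $f$ to a segment $iI$ of the imaginary axis, invoke the identity theorem plus Hahn--Banach for the density check, and then feed the resulting $C^{2}$ eigenvector family into Theorem~\ref{3.1.4.13}---is precisely how that reduction goes.

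One small streamlining remark for part~(ii): rather than first building the restricted $C_{|\tilde E}$-regularized semigroup and then applying part~(i) of Theorem~\ref{3018} on $\tilde E$ (which would in principle require you to check separately that $R(C_{|\tilde E})$ is dense in $\tilde E$, a point you do not address and the paper also suppresses), it is cleaner to apply Theorem~\ref{3.1.4.13}(ii) directly to the single-index family $g(t)=f(it)$, $t\in I$, after observing via the identity theorem that $\overline{\operatorname{span}\{f(it):t\in I\}}=\overline{\operatorname{span}\{f(\lambda):\lambda\in\Omega\}}=\tilde E$. This way the invariance and density verifications are absorbed into the ``elementary argumentation'' already invoked in the proof of Theorem~\ref{3.1.4.13}(ii), and the two theorems line up part-for-part.
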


Using Theorem \ref{3018} and the proof of \cite[Theorem 3.1.38]{knjigah} (see also \cite[Theorem 2.2.10]{knjigaho}), we may deduce the following result:

\begin{thm}\label{kakosteva}
Let $\theta\in(0,\frac{\pi}{2}),$ let ${\mathbb K}={\mathbb C},$ and let $-A$ generate an exponentially equicontinuous, analytic strongly continuous semigroup of angle $\theta$.
Assume $n\in\mathbb{N}$, $a_n>0$, $a_{n-i}\in\mathbb{C}$, $1\leq i\leq n$, $D(p(A))=D(A^n)$,
$p(A)=\sum_{i=0}^na_iA^i$ and $n(\frac{\pi}{2}-\theta)<\frac{\pi}{2}$.

\smallskip
\emph{(i)} Suppose there exists an open connected subset $\Omega$ of $\mathbb{C}$,
satisfying $\sigma_p(-A)\supseteq\Omega$, $p(-\Omega)\cap i\mathbb{R}\neq\emptyset$,
and $f:\Omega\to E$ is an analytic mapping
satisfying $f(\lambda)\in N(-A-\lambda)\setminus\{0\}$, $\lambda\!\in\!\Omega$.
Let $(x^*\!\circ\!f)(\lambda)=0$, $\lambda\in\Omega$,
for some $x^*\in E^*$ imply $x^*=0$.
Then, for every $\alpha\in(1,\frac{\pi}{n\pi-2n\theta})$, there exists $\omega\in\mathbb{R}$
such that $p(A)$ generates an entire $e^{-(p(A)-\omega)^{\alpha}}$-regularized group $(S_{0}(t))_{t\in\mathbb{C}}$.
Furthermore, $(S_{0}(t))_{t\geq 0}$ is frequently hypercyclic and,
for every $t>0$, the operator $C^{-1}S_{0}(t)$ is frequently hypercyclic.

\smallskip
\emph{(ii)} Suppose there exists an open connected subset $\Omega$ of $\mathbb{C}$,
satisfying $\sigma_p(-A)\supseteq\Omega$, $p(-\Omega)\cap\,i\mathbb{R}\neq\emptyset$,
and $f:\Omega\to E$ is an analytic mapping
satisfying $f(\lambda)\in N(-A-\lambda)\setminus\{0\}$, $\lambda\in\Omega $.
Let $E_0$ and $\tilde{E}$ be as in the formulation of Theorem~\emph{\ref{3018}(ii)}.
Then there exists $\omega\in\mathbb{R}$ such that,
for every $\alpha\in(1,\frac{\pi}{n\pi-2n\theta})$, $p(A)$
generates an entire $e^{-(p(A)-\omega)^{\alpha}}$-regularized group $(S_{0}(t))_{t\in\mathbb{C}}$
such that $(S_{0}(t)_{|\tilde{E}})_{t\geq 0}$ is frequently hypercyclic and,
for every $t>0$, the operator $C^{-1}S_{0}(t)_{\tilde{E}}$ is frequently hypercyclic.
\end{thm}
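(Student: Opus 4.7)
My plan is to prove Theorem \ref{kakosteva} by combining two ingredients: (a) an existence/construction result for the entire $e^{-(p(A)-\omega)^{\alpha}}$-regularized group generated by $p(A)$, which is essentially \cite[Theorem 3.1.38]{knjigah}/\cite[Theorem 2.2.10]{knjigaho}, and (b) our Theorem \ref{3018}, which needs eigenvalue data for $p(A)$ itself, not for $-A$. So the content of the proof is mostly a translation of spectral hypotheses from $-A$ to $p(A)$, after which everything follows from Theorem \ref{3018}.

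First I would construct the regularizer. Since $-A$ generates an exponentially equicontinuous analytic semigroup of angle $\theta$, its spectrum lies in a sector of half-opening $\pi/2-\theta$, so the spectrum of $p(A)$ lies in the image of this sector under $p(-\cdot)$; by the hypothesis $n(\pi/2-\theta)<\pi/2$, after translating by a suitable $\omega\in\mathbb{R}$ the operator $p(A)-\omega$ has spectrum in a proper subsector of the right half plane of half-opening strictly less than $\pi/2$. For $\alpha\in(1,\pi/(n\pi-2n\theta))$, the fractional power $(p(A)-\omega)^{\alpha}$ is therefore well defined with spectrum in a sector of half-opening $<\pi/2$, so $C:=e^{-(p(A)-\omega)^{\alpha}}\in L(E)$ is injective with dense range, and one obtains the entire $C$-regularized group $(S_{0}(t))_{t\in\mathbb{C}}$ generated by $p(A)$ exactly as in the cited proofs. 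This step is a direct reference; no new idea is needed here.

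Next I would transfer the eigenvalue data. If $f(\lambda)\in N(-A-\lambda)\setminus\{0\}$ for $\lambda\in\Omega$, then $Af(\lambda)=-\lambda f(\lambda)$ and consequently $p(A)f(\lambda)=p(-\lambda)f(\lambda)$. The map $\lambda\mapsto p(-\lambda)$ is a non-constant polynomial, hence an open map whose set of critical values is finite; since $p(-\Omega)\cap i\mathbb{R}\neq\emptyset$, I can pick $\lambda_{0}\in\Omega$ with $p(-\lambda_{0})\in i\mathbb{R}$ and $p'(-\lambda_{0})\neq 0$, and a neighborhood $U\subseteq\Omega$ of $\lambda_{0}$ on which $\mu=p(-\lambda)$ has an analytic local inverse $\lambda=\lambda(\mu)$. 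Setting $\tilde{\Omega}:=p(-U)$ (open, connected, meeting $i\mathbb{R}$) and $\tilde{f}(\mu):=f(\lambda(\mu))$ produces an analytic family of eigenvectors of $p(A)$ with eigenvalues filling $\tilde{\Omega}\supseteq$ a neighborhood of a point of $i\mathbb{R}$. For (i), if $x^{*}\in E^{*}$ vanishes on $\tilde{f}(\tilde{\Omega})$, then $x^{*}\circ f$ vanishes on the open set $U\subseteq\Omega$ and hence, by the identity theorem for analytic functions on a connected domain, on all of $\Omega$, forcing $x^{*}=0$. With these data, Theorem \ref{3018}(i) applied to $p(A)$ and $(S_{0}(t))_{t\geq 0}$ yields the frequent hypercyclicity of $(S_{0}(t))_{t\geq 0}$ and of every $C^{-1}S_{0}(t)$.

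For (ii) I would argue identically, except that the totality step is replaced by passing to the subspace $\tilde{E}=\overline{E_{0}}$: the calculation $p(A)f(\lambda)=p(-\lambda)f(\lambda)$ shows that $E_{0}=\mathrm{span}\{f(\lambda):\lambda\in\Omega\}$ is invariant under $p(A)$, so $p(A)_{|\tilde{E}}$ is a subgenerator of the restricted $C_{|\tilde{E}}$-regularized group, and on $\tilde{E}$ the family $\tilde{f}$ is, by construction, total; Theorem \ref{3018}(ii) then applies verbatim. The main obstacle I anticipate is purely bookkeeping: keeping track of the sectorial estimates needed for $(p(A)-\omega)^{\alpha}$ (so that $C$ is genuinely a bounded injection with dense range and the whole functional-calculus construction behind Theorem 3.1.38 goes through for the chosen range of $\alpha$), and ensuring that the local inverse of $p(-\cdot)$ can indeed be chosen near a point of $p(-\Omega)\cap i\mathbb{R}$ despite the possible presence of critical points of $p$. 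Everything else is an immediate application of Theorem \ref{3018}.
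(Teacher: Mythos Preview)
Your proposal is correct and follows exactly the route the paper indicates: the paper does not give a detailed argument but simply states that the result follows from Theorem \ref{3018} together with the proof of \cite[Theorem 3.1.38]{knjigah} (see also \cite[Theorem 2.2.10]{knjigaho}), and your write-up spells out precisely that combination---constructing the regularizing operator $C=e^{-(p(A)-\omega)^{\alpha}}$ via the sectorial functional calculus from the cited references, then transferring the eigenvector family from $-A$ to $p(A)$ via a local inverse of $\mu=p(-\lambda)$ and invoking Theorem \ref{3018}. One cosmetic remark: in your first paragraph you write that $\sigma(p(A))$ lies in the image of the sector ``under $p(-\cdot)$''; since $\sigma(A)$ (not $\sigma(-A)$) is the set lying in the sector $\Sigma_{\pi/2-\theta}$, the relevant image is under $p(\cdot)$, but as you correctly note this whole step is handled verbatim by the cited construction, so nothing in your argument depends on it.
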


Theorem \ref{3.1.4.13}, Theorem \ref{3018} and Theorem \ref{kakosteva} can be applied in a great number of concrete situations. In  what follows, we will continue our analyses from \cite[Example 3.1.40, Example 3.1.41, Example 3.1.44]{knjigah}:

\begin{example}\label{freja}
\begin{itemize}
\item[(i)] (\cite{fund}) Consider the following convection-diffusion type equation of the form
\[\left\{\begin{array}{l}
u_t=au_{xx}+bu_x+cu:=-Au,\\[0.1cm]
u(0,t)=0,\;t\geq 0,\\[0.1cm]
u(x,0)=u_0(x),\;x\geq 0.
\end{array}\right.
\]
As it is well known, the operator $-A$, acting with domain $D(-A)=\{f\in W^{2,2}([0,\infty)):f(0)=0\}$,
generates an analytic strongly continuous semigroup of angle $\pi/2$ in the space $E=L^2([0,\infty))$,
provided $a$, $b,$ $c>0$ and $c<\frac{b^2}{2a}<1$.
The same conclusion holds true if we consider the operator $-A$
with the domain $D(-A)=\{f\in W^{2,1}([0,\infty)):f(0)=0\}$ in $E=L^1([0,\infty))$.
Set
$$
\Omega :=\Biggl\{\lambda\in\mathbb{C}:\Bigl|\lambda-\Bigl(c-\frac{b^2}{4a}\Bigr)\Bigr|\leq
\frac{b^2}{4a},\;\Im\lambda\neq 0\text{ if }\Re\lambda\leq c-\frac{b^2}{4a}\Biggr\}.
$$
Let $p(x)=\sum_{i=0}^na_ix^i$ be a nonconstant polynomial such that $a_n>0$
and $p(-\Omega)\cap i\mathbb{R}\neq\emptyset$ (this condition holds provided that $a_0\in i\mathbb{R}$).
An application of Theorem \ref{kakosteva}(i) shows that there exists an injective operator $C\in L(E)$ such that $p(A)$
generates an entire $C$-regularized group $(S_{0}(t))_{t\geq 0}$
satisfying that $(S_{0}(t))_{t\geq 0}$ is frequently hypercyclic and each single operator $T(t_{0})$ is frequently hypercyclic ($t_{0}>0$). 
\item[(ii)] (\cite{ji}) Let $X$ be a symmetric space of non-compact type (of rank
one) and $p>2.$ Then there exists an injective operator $C\in L(L^{p}_{\natural}(X))$
such that for each $c\in {\mathbb R}$ the operator
$\Delta_{X,p}^{\natural}-c$ generates an entire
$C$-regularized group $(S_{0}(t))_{t\geq 0}$ in
$L^{p}_{\natural}(X).$ Furthermore, owing to \cite[Theorem 3.1]{ji} and Theorem \ref{kakosteva}(i), there exists a number $c_{p}>0$ such that, for every $c>c_{p},$ the semigroup
$(S_{0}(t))_{t\geq 0}$ is frequently hypercyclic in
$L^{p}_{\natural}(X)$ 
and each single operator $T(t_{0})$ is frequently hypercyclic in
$L^{p}_{\natural}(X)$  ($t_{0}>0$). 
\item[(iii)] (\cite{transfer}, \cite{knjigaho}) Suppose that $\alpha>0$, $\tau\in i\mathbb{R}\setminus\{0\}$ and $E:=BUC(\mathbb{R})$.
After the usual matrix conversion to a first order system, the equation $\tau u_{tt}+u_t=\alpha u_{xx}$ becomes
\[
\frac{d}{dt}\vec{u}(t)=P(D)\vec{u}(t),\;t\geq 0,
\text{ where }D\equiv-i\frac{d}{dx},\;
P(x)\equiv\begin{bmatrix}0 & 1\\-\frac{\alpha}{\tau}x^2 &-\frac{1}{\tau}\end{bmatrix},
\]
and $P(D)$ acts on $E\oplus E$ with its maximal distributional domain.
The polynomial matrix $P(x)$ is not Petrovskii correct
and applying \cite[Theorem 14.1]{l1} we get that there exists an injective operator $C\in L(E\oplus E)$
such that $P(D)$ generates an entire $C$-regularized group $(S_{0}(t))_{t\geq 0}$, with $R(C)$ dense.
Define the numbers $\omega_{1},\ \omega_{2} \in [0,+\infty]$ and functions $\psi_{r,j}\in E\oplus E$ ($r\in\mathbb{R}$, $j=1,2$) as it has been done in \cite[Example 3.1.44]{knjigah};
$\tilde{E}:=\overline{span\{\psi_{r,j}:r\in\mathbb{R},\;j=1,2\}}$.
Due to Theorem \ref{3.1.4.13}(ii), we have that $(S_{0}(t)_{|\tilde{E}})_{t\geq 0}$ is frequently hypercyclic  in $\tilde{E}$ and, 
for every $t>0$, the operator $C^{-1}S_{0}(t)_{|\tilde{E}}$ is frequently hypercyclic in $\tilde{E}.$
\item[(iv)] (\cite{cycch}) Denote by $(W_{Q}(t))_{t\geq 0}$ the $e^{-(-B^{2})^{N}}$-regularized semigroup generated by the operator $Q(B),$ whose existence has been proved in \cite[Lemma 5.2]{cycch}. If the requirement stated in the formulation of \cite[Theorem 5.3]{cycch} holds, then $(W_{Q}(t))_{t\geq 0}$ and each single operator $e^{(-B^{2})^{N}}W_{Q}(t_{0})$ is frequently hypercyclic ($t_{0}>0$); this simply follows from an application of Theorem \ref{3018}(i).
\item[(v)] (\cite{knjigah}) Finally, we turn our attention to integrated semigroups. Let $n\in {\mathbb N},$
$\rho(t):=\frac{1}{t^{2n}+1},\ t\in {\mathbb R},$ $Af:=f^{\prime},$
$D(A):=\{f\in C_{0,\rho}({\mathbb R}) : f^{\prime} \in
C_{0,\rho}({\mathbb R})\},$ $E_{n}:=(C_{0,\rho}({\mathbb
R}))^{n+1},$ $D(A_{n}):=D(A)^{n+1}$ and $A_{n}(f_{1},\cdot \cdot
\cdot ,f_{n+1}):=(Af_{1}+Af_{2},Af_{2}+Af_{3},\cdot \cdot \cdot ,
Af_{n}+Af_{n+1},Af_{n+1}),$ $(f_{1},\cdot \cdot \cdot, f_{n+1}) \in
D(A_{n}).$ Then $\pm A_{n}$
generate global polynomially bounded $n$-times integrated semigroups
$(S_{n,\pm}(t))_{t\geq 0}$ and neither $A_{n}$ nor $-A_{n}$
generates a local $(n-1)$-times integrated semigroup. If we denote by
$G_{\pm,n}$ the associated distribution semigroups generated by $\pm A_{n},$ then for every $\varphi_{1},\cdot\cdot \cdot,
\varphi_{n+1} \in {\mathcal D},$ we have:
$$
G_{\pm,n}(\delta_{t})\bigl(\varphi_{1},\cdot\cdot \cdot ,
\varphi_{n+1}\bigr)^{T}=\bigl(\psi_{1},\cdot\cdot \cdot , \psi_{n+1}\bigr)^{T},
$$
where 
$$
\psi_{i}(\cdot)=\sum \limits_{j=0}^{n+1-i}\frac{(\pm
t)^{j}}{j!}\varphi_{i+j}^{(j)}(\cdot \pm t),\quad 1\leq i\leq n+1.
$$
Set $E_{0}:={\mathcal D}^{n+1}$ and 
$
S_{k}(\varphi_{1}, \cdot\cdot \cdot ,
\varphi_{n+1})^{T}:=(\phi_{1},\cdot\cdot \cdot , \phi_{n+1})^{T},
$
where 
$$
\phi_{i}(\cdot)=\sum \limits_{j=0}^{n+1-i}\frac{(\mp
kt_{0})^{j}}{j!}\varphi_{i+j}^{(j)}(\cdot \mp kt_{0}),\quad 1\leq i\leq n+1,
$$
for any $k\in {\mathbb N},$ $t_{0}>0$ and $\varphi_{1}, \cdot\cdot \cdot ,
\varphi_{n+1} \in {\mathcal D}.$ Then we can simply verify (see also \cite[Example 3.2.39]{knjigah}) that the conditions of Theorem \ref{oma} hold with $C=(\lambda \mp A_{n})^{-n},$ where $\rho(\pm A_{n}) \ni \lambda>0$ is sufficiently large, since the series in (i)-(iii) from the formulation of this theorem converge absolutely. Hence, the integrated semigroups $(S_{n,\pm}(t))_{t\geq 0}$ are frequently hypercyclic in
$E_{n}$ and for each each number $t_{0}>0$ the single operators
$G_{\pm,n}(\delta_{t_{0}})$ are frequently hypercyclic in
$E_{n}$.
\end{itemize}
\end{example}

We close the paper with the observation that, for any $C$-regularized semigroup or integrated semigroup considered above, say $(S(t))_{t\geq 0},$ any finite direct sum $(S(t)\oplus S(t)\oplus \cdot\cdot \cdot \oplus S(t))_{t\geq 0}$ is again frequently hypercyclic or subspace frequently hypercyclic, with the meaning clear. The same holds for finite direct sums of considered single operators (cf. \cite{qjua} and \cite{kimpark} for more details about this topic).

\end{document}